\newtheorem{theorem}{Theorem}[section]
\newtheorem{prop}[theorem]{Proposition}
\theoremstyle{definition}
\newtheorem{definition}[theorem]{Definition}
\newtheorem{example}[theorem]{Example}
\numberwithin{equation}{section}
\subjclass[2010]{28A80, 05B45, 52C22}
\keywords{fractal, tiling}
\email{michael.barnsley@anu.edu.au}
\email{avince@ufl.edu}
\begin{document}
\title[Fractal Tiling]{Fractal Tiling}
\author[M. Barnsley]{Michael Barnsley}
\address{The Australian National University \\
Canberra, Australia}
\author[A. Vince]{Andrew Vince}
\address{Department of Mathematics, Universityof Florida \\
Gainesville, FL, USA }
\maketitle

\begin{abstract}
A simple, yet unifying method is provided for the construction of tilings by
tiles obtained from the attractor of an iterated function system (IFS). Many
examples appearing in the literature in ad hoc ways, as well as new examples,
can be constructed by this method. These tilings can be used to extend a
fractal transformation defined on the attractor of a contractive IFS to a
fractal transformation on the entire space upon which the IFS acts.

\end{abstract}

\section{Introduction}

\label{sec:Intro} The subject of this paper is fractal tilings of
${\mathbb{R}}^{n}$ and, more generally, complete metric spaces.  By
``fractal", we mean that each tile is the attractor of an iterated function
system (IFS). Computer generated drawings of tilings of the plane by
self-similar fractal figures appear in papers beginning in the 1980's and
1990's, for example the lattice tiling of the plane by copies of the
twindragon. Tilings constructed from an IFS often possess global symmetry and
self-replicating properties. Research on such tilings include the work of
numerous people including Akiyama, Bandt, Gelbrich, Gr\" ochenig, Hass,
Kenyon, Lagarias, Lau, Madych, Radin, Solomyak, Strichartz, Thurston, Vince,
and Wang; see for example \cite{aki, bandt2,Ge,gh,gm, kenyon, lau, lw,radin,
solomyak, str,thurstonG,V2, V1} and the references therein. Even aperiodic
tilings can be put into the IFS context; see for example the fractal version
of the Penrose tilings \cite{bg}.

The use of the inverses of the IFS functions in the study of tilings is
well-established, in particular in many of the references cited above. The
main contribution of this paper is a simple, yet unifying, method for
constructing tilings from an IFS. Many examples appearing in the literature in
ad hoc ways, as well as new examples, can be constructed by this method. 
The usefulness of the method is demonstrated by showing by applying it to the
construction of fractal transformations between basins of attractors of pairs of IFSs.

Section~\ref{sec:IFS} contains background on iterated function systems, their
attractors, and addresses. Tiling are constructed for both non-overlapping and
overlapping IFSs, concepts defined in Section~\ref{sec:non-overlapping}. The
tilings constructed in the non-overlapping case, in
Section~\ref{sec:non-overlapping}, include well known examples such as the
digit tilings, crystallographic tilings, and non-periodic tilings by copies of
a polygon, as well as new tilings in both the Euclidean and real projective
planes. The tilings constructed in the overlapping case, in
Section~\ref{sec:O}, depend on the choice of what is called a mask and are
new. Theorems~\ref{thm:basinN}, \ref{thm:random}, and \ref{thm:basinO} state
that, in both of the above cases, when the attractor has nonempty interior,
the resulting tiling very often covers the entire space. Tilings associated
with an attractor with  empty interior are also of interest and include
tilings of the graphs of fractal continuations of fractal functions, a
relatively new development in fractal geometry \cite{BV3}. The methods for
obtaining tilings from an IFS can be extended to a graph iterated function
system (GIFS); this is done in Section~\ref{sec:GIFS}. The Penrose tilings, as
well as new examples, can be obtained by this general construction.
Section~\ref{sec:FT} describes how, using our tiling method, a fractal
transformation, which is a special kind of mapping from the attractor of one
IFS to the attractor of another, can be extended from the attractor to the
entire space, for example, from ${\mathbb{R}}^{n}$ to ${\mathbb{R}}^{n}$ in
the Euclidean case.

\section{Iterated Function Systems, Attractors, and Addresses}

\label{sec:IFS}

Let $({\mathbb{X}},d_{{\mathbb{X}}})$ be a complete metric space and let $F$
be an IFS with attractor $A$; the definitions follow:

\begin{definition}
If $f_{n}:\mathbb{X}\rightarrow\mathbb{X}$, $n=1,2,\dots,N,$ are continuous
functions, then $F=\left(  \mathbb{X};f_{1},f_{2},...,f_{N}\right)  $ is
called an \textbf{iterated function system} (IFS). If each of the maps $f\in
F$ is a homeomorphism then $F$ is said to be \textbf{invertible}, and the
notation $F^{*}:=\left(  \mathbb{X};f_{1}^{-1},f_{2}^{-1},...,f_{N}%
^{-1}\right)$  is used.
\end{definition}

Subsequently in this paper we refer to some special cases. An IFS is called
\textbf{affine} if $\mathbb{X }= {\mathbb{R}}^{n}$ and the functions in the
IFS are affine functions of the form $f(x) = Ax+a$, where $A$ is an $n\times
n$ matrix and $a \in{\mathbb{R}}^{n}$. An IFS is called a \textbf{projective
IFS} if $\mathbb{X }= \mathbb{RP}^{n}$, real projective space, and the
functions in the IFS are projective functions of the form $f(x) = Ax$, where
$A$ is an $(n+1)\times(n+1)$ matrix and $x$ is given by homogeneous
coordinates.
\medskip

By a slight abuse of terminology we use the same symbol $F$ for the IFS, the
set of functions in the IFS, and for the following mapping. Letting
$2^{\mathbb{X}}$ denote the collection of subsets of $\mathbb{X}$, define
$F:2^{\mathbb{X}}\mathbb{\rightarrow}2^{\mathbb{X}}$ by
\[
F(B)=\bigcup_{f\in F}f(B)
\]
for all $B\in2^{\mathbb{X}}$. Let $\mathbb{H=H(X)}$ be the set of nonempty
compact subsets of $\mathbb{X}$. Since $F\left(  \mathbb{H}\right)
\subseteq\mathbb{H}$ we can also treat $F$ as a mapping
$F:\mathbb{H\rightarrow H}$. Let $d_{\mathbb{H}}$ denote the Hausdorff metric
on $\mathbb{H}$.

For $B\subset\mathbb{X}$ and $k\in\mathbb{N}:=\{1,2,...\}$, let $F ^{k}(S)$
denote the $k$-fold composition of $F$, the union of $f_{i_{1}}\circ f_{i_{2}%
}\circ\cdots\circ f_{i_{k}}(S)$ over all finite words $i_{1}i_{2}\cdots i_{k}$
of length $k.$ Define $F^{0}(S)=S.$

\begin{definition}
\label{def:attractor} A non-empty set $A\in\mathbb{H(X)}$ is said to be an
\textbf{attractor} of the IFS $F$ if

(i) $F(A)=A$ and

(ii) there is an open set $U\subset\mathbb{X}$ such that $A\subset U$ and
$\lim_{k\rightarrow\infty}F^{k}(S)=A$ for all $S\in\mathbb{H(}U)$, where the
limit is with respect to the Hausdorff metric.

The largest open set $U$ such that (ii) is true, i.e. the union of all open
sets $U$ such that (ii) is true, is called the \textbf{basin} of the attractor
$A$ with respect to the IFS $F$ and is denoted by $B=B(A)$.
\end{definition}

An IFS $F$ on a metric space $(\mathbb{X},d)$ is said to be
\textbf{contractive} if there is a metric $\hat{d}$ inducing the same topology
on $\mathbb{X}$ as the metric $d$ with respect to which the functions in $F$
are strict contractions, i.e., there exists $\lambda\in\lbrack0,1)$ such that
$\hat{d}_{\mathbb{X}}(f(x),f(y))\leq\lambda\hat{d}_{\mathbb{X}}(x,y)$ for 
all $x,y\in\mathbb{X}$ and for all $f\in F$. A classical result of Hutchinson
\cite{H} states that if $F$ is contractive on a complete metric space
$\mathbb{X}$, then $F$ has a unique attractor with basin $\mathbb{X}$.

Let $[N]=\{1,2,\dots,N\}$, with $[N]^{k}$ denoting words of length $k$ in the
alphabet $[N],$ and $[N]^{\infty}$ denoting infinite words of the form
$\omega_{1}\omega_{2}\cdots$, where $\omega_{i}\in\lbrack N]$ for all $i$.
\textquotedblleft Word" will always refer to an infinite word unless otherwise
specified. A \textbf{subword} (either finite or infinite) of a word $\omega$
is a string of consecutive elements of $\omega$. The length $k$ of $\omega
\in\lbrack N]^{k}$ is denoted by $|\omega|=k$. For $\omega\in\lbrack N]^{k}$
we use the notation $\overline{\omega}=\omega\omega\omega\cdots.$. For
$\omega\in\lbrack N]^{\infty}$ and $k\in\mathbb{N}:=\{1,2,..\}$ the notation
\[
\omega|k:=\omega_{1}\omega_{2}\cdots\omega_{k}%
\]
\vskip2mm

\noindent is introduced, and for an IFS $F=\left(  \mathbb{X};f_{1}%
,f_{2},...,f_{N}\right) $ and $\omega\in[N]^{k}$, the following shorthand
notation will be used for compositions of function:%

\[
\begin{aligned} f_{\omega} &:=  f_{\omega_{1}}\circ f_{\omega_{2}} \circ\cdots \circ f_{\omega_{k}}, \\
( f^{-1})_{\omega} &:= f^{-1} _{\omega_{1}}\circ f^{-1} _{\omega_{2}} \circ\cdots \circ f^{-1} _{\omega_{k}}
\end{aligned}
\]
\medskip

\noindent Note that, in general, $(f^{-1})_{\omega} \neq(f_{\omega})^{-1}$.
\medskip

In order to assign \textit{addresses} to the points of an attractor of an IFS,
it is convenient to introduce the \textit{point-fibred} property. The
following notions concerning point-fibred attractors \cite{BV4} derive from
results of Kieninger \cite{kieninger}, adapted to the present setting. Let $A$
be an attractor of $F$ on a complete metric space ${\mathbb{X}}$, and let $B$
be the basin of $A$. The attractor $A$ is said to be \textbf{point-fibred}
with respect to $F$ if (i) $\left\{  f_{\omega|k}(C)\right\}  _{k=1}^{\infty}$
is a convergent sequence in $\mathbb{H(}{\mathbb{X)}}$ for all $C\in
\mathbb{H(X)}$ with $C\subset B(A)$; (ii) the limit is a singleton whose value
is independent of $C$. If $A$ is point-fibred (w.r.t. $F)$ then we denote the
limit in (ii) by $\left\{  \pi(\omega)\right\}  \subset A$.

\begin{definition}
If $A$ is a point-fibred attractor of $F$, then, with respect to $F$ and $A$,
the \textbf{coordinate map} $\pi\,:\,[N]^{\infty}\rightarrow A$ is defined by
\begin{equation}
\pi(\omega)=\lim_{k\rightarrow\infty}f_{\omega|k}(x_{0}). \label{coordmapeq}%
\end{equation}
The limit is independent of $x_{0} \in B(A)$. For $x\in A$, any word in
$\pi^{-1}(x)$ is called an \textbf{address} of $x$.
\end{definition}

Equipping $[N]^{\infty}$ with the product topology, the map $\pi:[N]^{\infty
}\rightarrow A$ is continuous and onto. If $F$ is contractive, then $A$ is
point-fibred (w.r.t. $F$). A point belonging to the attractor of a
point-fibred IFS has at least one address.

\section{Fractal Tilings from a non-overlapping IFS}

\label{sec:non-overlapping}

Let $S^{o}$ denote the interior of a set $S$ in a complete metric space
$\mathbb{X}$. Two sets $X$ and $Y$ are \textit{overlapping} if $(X\cap Y)^{o}
\neq\emptyset$.

\begin{definition}
In a metric space $\mathbb{X}$, a \textbf{tile} is a nonempty compact set. A
\textbf{tiling} of a set $S$ is a set of non-overlapping tiles whose union is
$S$.
\end{definition}

\begin{definition}
An attractor $A$ of an IFS $F=({\mathbb{X}};f_{1},f_{2},\dots,f_{N})$ is
called \textbf{overlapping} (w.r.t. $F$) if $f(A)$ and $g(A)$ are overlapping
for some $f,g\in F$. Otherwise $A$ is called \textbf{non-overlapping (}w.r.t.
$F$). A non-overlapping attractor $A$ is either \textbf{totally disconnected},
i.e, $f(A)\cap g(A)=\emptyset$ for all $f,g\in F$ or else$\ A$ is called
\textbf{just touching}. Note that a non-overlapping attractor with non-empty
interior must be just touching.
\end{definition}

Starting with an attractor $A$ of an invertible IFS
\[
F=\{X\,;\,f_{1},f_{2},\dots,f_{N}\},
\]
potentially an infinite number of tilings can be constructed from $A$ and $F$
- one tiling $T_{\theta}$ for each word $\theta\in\lbrack N]^{\infty}$. Given
a word $\theta\in\lbrack N]^{\infty}$ and a positive integer $k$, for any
$\omega\in\lbrack N]^{k}$, let%

\[
\begin{aligned} t_{\theta,\omega} &= ((f^{-1})_{\theta|k}\circ f_{\omega})(A), \\
T_{\theta, k} &= \{ t_{\theta,\omega} \, : \, \omega \in [N]^k\}.
\end{aligned}
\]
Since the IFS $F$ is non-overlapping, the sets in $T_{\theta, k}$ do not
overlap. Since, for any $\omega\in\lbrack N]^{k}$, we have
\[
(f^{-1})_{\theta|{k}}\circ f_{\omega}=(f^{-1})_{\theta|{k}}\circ
(f_{\theta_{k+1}})^{-1}\circ f_{\theta_{k+1}}\circ f_{\omega}=(f^{-1}%
)_{\theta|{k+1}} \circ f_{\theta_{k+1}\omega},
\]
the inclusion
\[
T_{\theta}\subset T_{\theta,k+1}%
\]
holds for all $k$. Therefore%
\begin{equation}
\label{eq:T}T_{\theta} :=\bigcup_{k=1}^{\infty}T_{\theta,k}%
\end{equation}
is a tiling of
\[
B(\theta):=\bigcup_{k=1}^{\infty}(f^{-1})_{\theta|{k}}(A)\subseteq{X},
\]
where the union is a nested union. Note that $B(\theta)$ is connected if $A$
is connected, and $B(\theta)$ has non-empty interior if $A$ has non-empty interior.

If $F$ is contractive and the unique attractor $A$ has non-empty interior, it
is possible to tile the entire space ${\mathbb{X}}$ with non-overlapping
copies of $A$. More specifically, if $F$ is contractive and $\theta$ satisfies
a not too restrictive condition given below, then $T_{\theta}$ tiles the
entire space ${\mathbb{X}}$. In the case that $T_{\theta}$ tiles ${\mathbb{X}%
}$, we call $T_{\theta}$ a \textbf{full tiling}.

\begin{example}
The IFS $F=\{{\mathbb{R}};\,f_{1},f_{2}\}$ where $f_{1}(x)=x/2$ and
$f_{2}(x)=x/2+1/2$ has attractor $[0,1]$. In this case, the tiling
$T_{\overline{1}}$ is the tiling of $[0,\infty)$ by unit intervals. The
tiling $T_{\overline{2}}$ is the tiling of $(-\infty,1]$ by unit intervals.
If $\theta\neq{\overline{1}},{\overline{2}}$, then $T_{\theta}$ is the full
tiling of ${\mathbb{R}}$ by unit intervals.
\end{example}

\begin{definition}
\label{def:full} For an IFS $F$ with attractor $A$, call $\theta\in
[N]^{\infty}$ \textbf{full} if there exists a nonempty compact set $A^{\prime
o}$ such that, for any positive integer $M$, there exist $n>m\geq M$ such
that
\[
f_{\theta_{n}}\circ f_{\theta_{n-1}} \circ\cdots\circ f_{\theta_{m+1}} (A)
\subset A^{\prime}.
\]

\end{definition}

By Proposition~\ref{prop:reversible} and Theorem~\ref{thm:random}, nearly all
words $\theta$ are full.

\begin{definition}
\label{def:reversible} Call $\theta\in\lbrack N]^{\infty}$ \textbf{reversible}
w.r.t. a point-fibred attractor $A$ and IFS $F$ if there exists an
$\omega=\omega_{1}\omega_{2}\cdots\in\lbrack N]^{\infty}$ such that $\omega$
is the address of some point in $A^{o}$ and, for every pair of positive
integer $M$ and $L$, there is an integer $m\geq M$ such that%
\begin{equation}
\omega_{1}\omega_{2}\cdots\omega_{L} = \theta_{m+L}\theta_{m+L-1}\cdots
\theta_{m+1}. \label{reveqn}%
\end{equation}
Call $\theta\in\lbrack N]^{\infty}$ \textbf{strongly reversible } w.r.t. $A$
and $F$ if there exists $\omega=\omega_{1}\omega_{2}\cdots\in\lbrack
N]^{\infty}$ such that $\omega$ is the address of some point in $A^{o}$ and,
for every positive integer $M$, there is an integer $m\geq M$ such that%
\begin{equation}
\omega_{1}\omega_{2}\cdots\omega_{m} = \theta_{m}\theta_{m-1}\cdots\theta_{1}.
\label{sreveqn}%
\end{equation}

\end{definition}

\medskip

By statement (3) of Theorem~\ref{prop:reversible} below, if $\theta$ is
strongly reversible,  then $\theta$ is reversible. The converse, however, is
not true; for example, the string  $\theta=3\overline{12}$ is reversible but
not strongly reversible. To see that $3\overline{12}$ is reversible, choose
$\omega=\overline{12}$.

Let $S:[N]^{\infty}\rightarrow\lbrack N]^{\infty}$ denote the usual shift map
and introduce the notation $\overleftarrow{\omega|m}:=\omega_{m}\omega
_{m-1}\cdots\omega_{2}\omega_{1}$. Equation \ref{reveqn} can be expressed
$S^{m}\theta|L=\overleftarrow{\omega|L}$, while equation \ref{sreveqn} can be
expressed $\theta|m=\overleftarrow{\omega|m}$. Note that Definition
\ref{def:reversible} is equivalent to: there is a point $\omega$, whose
image under $\pi_{F}$ lies in the interior of $A$, such that, if it is
truncated to any given finite length and then reversed, then it occurs as a
finite subword of $\theta$ infinitely many times.

\begin{definition}
A word $\theta\in[N]^{\infty}$ is \textbf{disjunctive} if every finite word is
a subword of $\theta$. In fact, if $\theta$ is disjunctive, then every finite
word (in the alphabet $[N]$) appears as a subword in $\sigma$ infinitely many times.
\end{definition}

\begin{theorem}
\label{prop:reversible} For an IFS $F$, let $A$ be a point-fibred attractor.
With respect to $A$ and $F$:

\begin{enumerate}
\item There are infinitely many disjunctive words in $[N]^{\infty}$ for
$N\geq2$.

\item If $A^{o} \neq\emptyset$, then every disjunctive word is strongly reversible.

\item Every strongly reversible word is reversible.

\item A word is reversible if and only if it is full.
\end{enumerate}
\end{theorem}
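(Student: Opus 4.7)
I would tackle the four parts in the order (1), (3), (2), (4), respecting their logical dependencies.

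For (1), I would use the standard Champernowne-type construction: enumerate all finite words over $[N]$ in shortlex order as $w_{1},w_{2},\dots$ and form $\theta^{*}:=w_{1}w_{2}w_{3}\cdots$; every finite word occurs as some $w_{i}$, hence as a subword, so $\theta^{*}$ is disjunctive. Prepending an arbitrary finite word to $\theta^{*}$ preserves disjunctivity and yields infinitely many distinct examples. Part (3) is immediate from the definitions: if $\omega$ witnesses strong reversibility, choose $m\geq M+L$ with $\omega|m=\overleftarrow{\theta|m}$; taking length-$L$ prefixes of both sides gives $\omega|L=\theta_{m}\theta_{m-1}\cdots\theta_{m-L+1}=\theta_{m'+L}\cdots\theta_{m'+1}$ with $m':=m-L\geq M$, so the same $\omega$ witnesses reversibility.

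Part (2) is the combinatorial heart of the theorem. Given disjunctive $\theta$ and $A^{o}\neq\emptyset$, first pick any address $\omega^{*}$ of a point in $A^{o}$; the point-fibred property gives $f_{\omega^{*}|k}(A)\to\{\pi(\omega^{*})\}$ in Hausdorff metric, so for $k$ sufficiently large the word $\sigma:=\omega^{*}|k$ satisfies $f_{\sigma}(A)\subset A^{o}$. Next, using disjunctivity, select $m_{1}\geq|\sigma|$ with $\theta_{m_{1}-|\sigma|+1}\cdots\theta_{m_{1}}=\overleftarrow{\sigma}$, which ensures that $\overleftarrow{\theta|m_{1}}$ begins with $\sigma$. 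Build a strictly increasing sequence $m_{1}<m_{2}<\cdots$ inductively: given $m_{k}$, apply disjunctivity to the finite prefix $\theta|m_{k}$ to find some offset $d_{k}\geq 1$ with $\theta_{d_{k}+1}\cdots\theta_{d_{k}+m_{k}}=\theta|m_{k}$, and set $m_{k+1}:=m_{k}+d_{k}$. The relations $\theta_{i}=\theta_{i+d_{k}}$ for $i\leq m_{k}$ are precisely the compatibility conditions that make $\overleftarrow{\theta|m_{k}}$ a prefix of $\overleftarrow{\theta|m_{k+1}}$. The nested sequence of reversed prefixes has a unique common extension $\omega\in[N]^{\infty}$, which satisfies $\omega|m_{k}=\overleftarrow{\theta|m_{k}}$ for every $k$; since $\omega$ starts with $\sigma$, $\pi(\omega)\in f_{\sigma}(A)\subset A^{o}$, giving strong reversibility.

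For (4), the forward direction uses the point-fibred property again: since $f_{\omega|L}(A)\to\{\pi(\omega)\}\subset A^{o}$, pick $L_{0}$ so that $f_{\omega|L_{0}}(A)$ is contained in a fixed compact $A'\subset A^{o}$. Reversibility then provides, for each $M$, an $m\geq M$ with $\omega|L_{0}=\theta_{m+L_{0}}\cdots\theta_{m+1}$, whence $f_{\theta_{m+L_{0}}}\circ\cdots\circ f_{\theta_{m+1}}=f_{\omega|L_{0}}$ sends $A$ into $A'$, giving fullness with $n:=m+L_{0}$. Conversely, from fullness collect pairs $(m_{k},n_{k})$ with $n_{k}>m_{k}\geq k$ and $f_{\tau_{k}}(A)\subset A'$ for $\tau_{k}:=\theta_{n_{k}}\cdots\theta_{m_{k}+1}$; by prepending additional symbols on the $m$-side (which preserves the containment, since $f_{\tau_{k}}(A)\subset A'\subset A$) one may arrange $|\tau_{k}|\to\infty$. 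For any fixed $\upsilon\in[N]^{\infty}$, the words $\omega^{(k)}:=\tau_{k}\upsilon$ satisfy $\pi(\omega^{(k)})\in A'$; extract a convergent subsequence in the compact space $[N]^{\infty}$ with limit $\omega$. Then for each $L$ and $j$ large enough, $\omega|L=\tau_{k_{j}}|L=\theta_{n_{k_{j}}}\cdots\theta_{n_{k_{j}}-L+1}$, establishing reversibility, while continuity of $\pi$ gives $\pi(\omega)\in\overline{A'}\subset A^{o}$ (choosing $A'$ with compact closure in $A^{o}$). The main obstacle is the inductive nesting in Part (2); the decisive observation is that disjunctivity of $\theta$, applied to the growing finite prefix $\theta|m_{k}$, furnishes occurrences at offsets $d_{k}\geq 1$, and these occurrences are exactly the partial-periodicity relations needed to extend the matched reversed prefix.
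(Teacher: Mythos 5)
Your proof is correct and follows essentially the same route as the paper's: the same seed-and-extend induction for part (2) (your partial-periodicity phrasing of the inductive step, finding $\theta|m_k$ at offset $d_k\geq 1$, is equivalent to the paper's search for an occurrence of $\overleftarrow{\omega|t_n}=\theta|t_n$ ending at position $t_{n+1}$), the same prefix-truncation argument for (3), and the same compactness/subsequence argument for the converse of (4). The only differences are cosmetic: you prove (1) constructively where the paper cites Staiger, and your device of appending a fixed tail $\upsilon$ in (4) keeps the argument inside $[N]^{\infty}$ and lets you verify $\pi(\omega)\in A^{o}$ explicitly, a point the paper leaves implicit.
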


\begin{proof}
Not only are there infinitely many disjunctive words for $N\geq2$, but the set
of disjunctive sequences is a large subset of $[N]^{\infty}$ in a topological,
in a measure theoretic, and in an information theoretic sense \cite{S}.

Concerning statement (2), assume that $\theta$ is disjunctive. Let $a\in
A^{o}$ and let $\sigma$ be an address of $a$. There is a neighborhood $N(a)$
of $a$ that lies in $A$ and an integer $s$ such that any point whose address
starts with $\sigma|s$ lies in $N(A)\subset A$. Statement (2) will now be
proved by induction. We claim that there exists a word $\omega$ and an
increasing sequence $t_{n}\geq n$ of integers and such that the following
statements hold:

\begin{enumerate}
\item $\omega|{t_{n}}=\overleftarrow{\theta|{t_{n}}}$,

\item $\omega|{t_{n}}=\sigma|{s}\overleftarrow{\theta|{t_{n}-s}}$.
\end{enumerate}

\noindent The two conditions guarantee that $\theta$ is stongly reversible.
The case $n=1$ is shown as follows. Since $\theta$ is disjunctive, there is a
$t_{1}>s$ such that $\sigma|s=\theta_{t_{1}}\theta_{t_{1}-1}\cdots
\theta_{t_{1}-s+1}$. Define
\[
\omega|{t_{1}}:=\overleftarrow{\theta|{t_{1}}}=\sigma|{s}\overleftarrow{\theta
|{t_{1}-s}}.
\]
We next go from $n$ to $n+1$. Since $\theta$ is disjunctive, there is a
$t_{n+1}>t_{n}$ such that
\[
\omega|{t_{n}}=\theta_{t_{n+1}}\theta_{t_{n+1}-1}\cdots\theta_{t_{n+1}%
-t_{n}+1}.
\]
Now define
\[
\omega|{t_{n+1}}:=\overleftarrow{\theta|{t_{n+1}}}=\omega|{t_{n}%
}\overleftarrow{\theta|{t_{n+1}-t_{n}}}=\sigma|{s}\,\overleftarrow{\theta
|{t_{n+1}-s}}.
\]
The second equality guarantees that the initial $t_{n}$ elements of $\omega$
are unchanged; the third equality follows from condition (2) of the induction hypothesis.

Concerning statement (3), assume that $\theta$ is strongly reversible, and let
positive integers $M$ and $L$ be given. If $\omega$, as required, has the
strong reversal property with respect to $\theta$,  then there exists an
integer $n\geq M+L$ so that $\theta_{n}\theta_{n-1}\cdots\theta_{1}
=\omega_{1}\omega_{2}\cdots\omega_{n}$, from which it follows that there is an
integer $m \geq M$ such that  $\theta_{m+L}\theta_{m+L-1}\cdots\theta
_{1}=\omega_{1}\omega_{2}\cdots\omega_{m+L}$ and therefore $\theta_{m+L}%
\theta_{m+L-1}\cdots\theta_{m+1}=$ \linebreak$\omega_{1}\omega_{2}\cdots
\omega_{L}$. It follows that $\theta$ is reversible.

Concerning one direction of statement (4), assume that $\theta$ is reversible,
and let $\omega$ have the required reversal property w.r.t. $\theta$. Since
$\pi(\omega)\in A^{\circ}$ it follows that there is $L\in\mathbb{N}$ so that
$f_{\omega|L}(A)\subset A^{\circ}$. Choose $A^{\prime}=f_{\omega|L}(A)$ in
Definition~\ref{def:full} of full. Let $M$ be given. By the definition of
$\theta$ reversible, there exists an $m\geq M$ such that
\[
f_{\theta_{m+L}}\circ f_{\theta_{m+L-1}}\circ\cdots\circ f_{\theta_{m+1}%
}(A)=f_{\omega_{1}}\circ f_{\omega_{2}}\circ\cdots\circ f_{\omega_{L}%
}(A)=A^{\prime}.
\]
Taking $n = m+L$ in Definition~\ref{def:full}, it follows that $\theta$ is full.

Concerning the other direction of statement (4), suppose that $\theta$ is full
and let $A^{\prime}$ be the corresponding compact set. It follows that, for
any positive integer $M$, there is $n>m\geq M$ so that $f_{\theta_{n}}\circ
f_{\theta_{n-1}}\cdots\circ f_{\theta_{m+1}}(A)\subset A^{\prime}\subset
A^{\circ}$, which implies that $f_{\theta_{n_{k}}}\circ f_{\theta_{n_{k}-1}%
}\cdots\circ f_{\theta_{1}}(A)\subset A^{\prime}\subset A^{\circ}$ for an
infinite strictly increasing sequence $\left\{  n_{k}\right\}  _{k=1}^{\infty
}$ of positive integers. The set
\[
\lbrack N]^{\infty}\cup\bigcup_{k=1}^{\infty}[N]^{k}%
\]
becomes a compact metric space when endowed with an appropriate metric (words
with a long common prefix are close). Therefore the sequence of finite words
$\left\{  \theta_{n_{k}}\theta_{n_{k}-1}...\theta_{1}\right\}  _{k=1}^{\infty
}$ has a convergent subsequence, which we continue to denote by the same
notation $\left\{  \theta_{n_{k}}\theta_{n_{k}-1}...\theta_{1}\right\}
_{k=1}^{\infty}$, with limit $\omega\in\lbrack N]^{\infty}.$ Hence, for any
$L$ and $k$ sufficiently large, $\theta_{n_{k}}\theta_{n_{k}-1}...\theta
_{n_{k}-L+1}=\omega_{1}\omega_{2}...\omega_{L}$. It follows that $\theta$ is
reversible with reverse word $\omega$.
\end{proof}

\begin{theorem}
\label{thm:basinN} For an invertible IFS $F=\left(  \mathbb{X};f_{1}%
,f_{2},...,f_{N}\right) ,$ let $A$ be a just-touching attractor with non-empty
interior. If $\theta$ is a full word, then $T_{\theta}$ is a tiling of the set
$B(\theta)$ which contains the basin of $A$. If $F$ is contractive, then
$T_{\theta}$ is a full tiling of $\mathbb{X}$.
\end{theorem}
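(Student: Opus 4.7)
The construction preceding the theorem already shows that $T_\theta$ is a tiling of $B(\theta) = \bigcup_{k=1}^\infty (f^{-1})_{\theta|k}(A)$: non-overlap of the tiles within each $T_{\theta,k}$ follows from the just-touching hypothesis on $A$ combined with invertibility of $F$, and the nestedness $T_{\theta,k} \subset T_{\theta,k+1}$ has been verified. So the one remaining claim is the inclusion $B(A) \subset B(\theta)$ under fullness of $\theta$. The contractive half of the theorem then follows at once, because Hutchinson's theorem forces $B(A) = \mathbb{X}$ whenever $F$ is contractive.

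My plan for that inclusion is to fix $x \in B(A)$ and write $h_k := f_{\theta_k} \circ f_{\theta_{k-1}} \circ \cdots \circ f_{\theta_1}$, which is the functional inverse of $(f^{-1})_{\theta|k}$; then $x \in (f^{-1})_{\theta|k}(A)$ is equivalent to $h_k(x) \in A$, and it suffices to produce such a $k$. The argument would combine two ingredients. First, since $h_k(x)$ always lies in $F^k(\{x\})$ and $\{x\} \in \mathbb{H}(B(A))$, the attractor property forces $F^k(\{x\}) \to A$ in the Hausdorff metric, pushing $h_k(x)$ into any prescribed neighborhood of $A$ for all sufficiently large $k$. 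Second, fullness furnishes a compact $A^{\prime} \subset A^\circ$ and, for every $M$, indices $n > m \geq M$ with $g_{n,m}(A) \subset A^{\prime}$, where $g_{n,m} := f_{\theta_n} \circ \cdots \circ f_{\theta_{m+1}}$. The identity $h_n = g_{n,m} \circ h_m$ is the bridge: if $h_m(x)$ is close to $A$ and $g_{n,m}$ carries $A$ well inside $A^\circ$, then $h_n(x) = g_{n,m}(h_m(x))$ ought to land in $A$.

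Making this rigorous in the contractive case is routine. I would fix $\delta > 0$ so that the closed $\delta$-neighborhood of $A^{\prime}$ sits inside $A^\circ$, which is possible because $A^{\prime}$ is compact in the open set $A^\circ$. With uniform contraction factor $\lambda < 1$ in the metric $\hat d$, the composition $g_{n,m}$ is $\lambda^{n-m}$-Lipschitz, so if $M$ is picked large enough that $h_m(x)$ lies within $\delta$ of $A$ for all $m \geq M$, then fullness supplies $n > m \geq M$ with $g_{n,m}(A) \subset A^{\prime}$, and the Lipschitz estimate applied to a nearest point $a \in A$ to $h_m(x)$ places $h_n(x)$ within $\lambda^{n-m}\delta \leq \delta$ of $g_{n,m}(a) \in A^{\prime}$, hence inside $A^\circ \subset A$.

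The main obstacle is the non-contractive case, where $g_{n,m}$ has no uniform modulus of continuity and a single $\delta$ need not serve all admissible pairs $(n,m)$. I would handle this by a subsequential compactness argument on the reversed finite words $\theta_n \theta_{n-1} \cdots \theta_{m+1}$, paralleling the last direction of the proof of Theorem~\ref{prop:reversible}(4), so as to select a pair $(n,m)$ for which the associated $g_{n,m}$ behaves tamely on a fixed neighborhood of $A$ large enough to swallow $h_m(x)$. This matching of the decay of the neighborhoods of $A$ containing $h_m(x)$ against the pointwise continuity of $g_{n,m}$ is where the technical bookkeeping of the proof is concentrated.
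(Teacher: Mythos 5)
Your proposal follows essentially the same route as the paper, which proves Theorem~\ref{thm:basinN} as the special case of Theorem~\ref{thm:basinO} in which the mask is $\{f_{i}(A)\}$: the same reduction (show $h_{n}(x):=f_{\theta_{n}}\circ\cdots\circ f_{\theta_{1}}(x)\in A$ for some $n$), the same two ingredients ($F^{m}(\{x\})\to A$ for $x$ in the basin, and the fullness set $A^{\prime}\subset A^{o}$), and the same bridge $h_{n}=g_{n,m}\circ h_{m}$. The quantifier difficulty you isolate in the non-contractive case is genuine, and in fact the paper's own proof of Theorem~\ref{thm:basinO} elides it: there $\epsilon_{0}$ is extracted from a pair $(n,m)$ that was chosen only after $M$, yet the argument then demands $M\geq M_{\epsilon_{0}}$. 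Your proposed repair is the right one and can be made concrete with no further bookkeeping: by Theorem~\ref{prop:reversible}(4), fullness is equivalent to reversibility, so there is a fixed word $\omega$ with $\pi(\omega)\in A^{o}$, hence a fixed $L$ with $f_{\omega|L}(A)\subset A^{o}$, such that $\theta_{m+L}\theta_{m+L-1}\cdots\theta_{m+1}=\omega_{1}\omega_{2}\cdots\omega_{L}$, i.e.\ $g_{m+L,m}=f_{\omega|L}$, for arbitrarily large $m$. Since $f_{\omega|L}$ is a single continuous map and $A$ is compact, an $\epsilon_{0}>0$ with $f_{\omega|L}(A_{\epsilon_{0}})\subset A$ can be fixed before $M$ is chosen; taking any such $m\geq M_{\epsilon_{0}}$ gives $h_{m+L}(x)=f_{\omega|L}(h_{m}(x))\in f_{\omega|L}(A_{\epsilon_{0}})\subset A$, which closes the general case exactly along the lines you sketch.
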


\begin{proof}
The proof is postponed because it is a special case of
Theorem~\ref{thm:basinO} in Section~\ref{sec:O}.
\end{proof}

By the above theorem, if $F$ is contractive and $\theta$ is full, then
$T_{\theta}$ tiles the entire space ${\mathbb{X}}$. According to
Proposition~\ref{prop:reversible}, full words are plentyful. According to the
next result, if $F$ is contractive, for \textit{any} infinite word $\theta$,
$T_{\theta}$ tiles ${\mathbb{X}}$ with probability $1$. Define a word
$\theta\in\lbrack N]^{\infty}$ to be a \textbf{random word} if there is a
$p>0$ such that each $\theta_{k},\,k=1,2,\dots$, is selected at random from
$\{1,2,...,N\}$ where the probability that $\theta_{k}=n,\,n\in\lbrack N],$ is
greater than\ or equal to $p,$ independent of the preceeding outcomes.

\begin{theorem}
\label{thm:random} Let $F = \{{\mathbb{X}} ;\, f_{1}, f_{2}, \dots, f_{N}\}$,
where ${\mathbb{X}}$ is compact, be a just touching invertible IFS with
attractor $A$ with non-empty interior. If $\theta\in[N]^{\infty}$ is a random
word, then, with probability $1$, the tiling $T_{\theta}$ covers the basin
$B(A)$. If ${\mathbb{X}}$ is contractive, then $T_{\theta}$ is a full tiling
of ${\mathbb{X}}$.
\end{theorem}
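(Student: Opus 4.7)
The plan is to reduce Theorem~\ref{thm:random} to Theorem~\ref{thm:basinN} by showing that a random word $\theta$ is, with probability one, full. Since $A^{o}\neq\emptyset$ by hypothesis, parts (2), (3), and (4) of Theorem~\ref{prop:reversible} together imply that every disjunctive word is full. Hence it suffices to prove that a random word is disjunctive with probability one.

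To establish almost-sure disjunctivity, I would fix a finite word $w\in[N]^{\ell}$ and partition $\theta$ into the disjoint consecutive blocks $B_{j}=\theta_{(j-1)\ell+1}\theta_{(j-1)\ell+2}\cdots\theta_{j\ell}$, $j=1,2,\ldots$. By the definition of a random word, each symbol equals any prescribed letter with conditional probability at least $p$ given the past, so the conditional probability that $B_{j}=w$, given $\theta_{1},\ldots,\theta_{(j-1)\ell}$, is at least $p^{\ell}$. A short induction on $M$ via the tower property of conditional expectation then yields
\[
\Pr\bigl(B_{j}\neq w\text{ for all }j\leq M\bigr)\leq (1-p^{\ell})^{M},
\]
which tends to $0$. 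Thus $w$ occurs as a subword of $\theta$ almost surely. Since the set of all finite words over $[N]$ is countable, a union bound shows that, with probability one, every finite word occurs as a subword of $\theta$, so $\theta$ is disjunctive almost surely.

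Applying Theorem~\ref{prop:reversible} on this almost sure event shows $\theta$ is full with probability one, and Theorem~\ref{thm:basinN} then yields that $T_{\theta}$ tiles $B(\theta)\supseteq B(A)$ almost surely; if $F$ is contractive then $B(A)=\mathbb{X}$ by Hutchinson's theorem and $T_{\theta}$ is a full tiling of $\mathbb{X}$ with probability one. I anticipate no serious obstacle, since the structural work is already contained in Theorems~\ref{prop:reversible} and~\ref{thm:basinN}. The only subtle point is probabilistic: the random-word hypothesis gives merely a lower bound on the conditional distribution of each letter rather than an i.i.d.\ structure, so the estimate on $\Pr(B_{j}\neq w\text{ for all }j\leq M)$ must be carried out through conditional probabilities and the tower property, not as a plain product of marginal probabilities.
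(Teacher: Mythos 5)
Your reduction is genuinely different from the paper's argument. The paper never passes through fullness: it fixes $x$ in the basin, invokes the chaos game theorem \cite[Theorem 1]{BV2} to conclude that the random orbit $f_{\theta_n}\circ\cdots\circ f_{\theta_1}(x)$ almost surely enters a prescribed open set $Q\subset A^{o}$, inflates this to a ball $B_x$ about $x$, and then covers the compact sets $B_\epsilon$ exhausting the basin by finitely many such balls. Your route --- random $\Rightarrow$ almost surely disjunctive $\Rightarrow$ full, then quote Theorem~\ref{thm:basinN} --- is cleaner, and its probabilistic core is right: the block decomposition together with the tower property correctly handles the fact that the definition of a random word only bounds conditional probabilities, and the countable union bound over finite words is fine. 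It also isolates a statement of independent interest (a random word is almost surely disjunctive) that does not mention the IFS at all.

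There is, however, one gap in the reduction. The chain disjunctive $\Rightarrow$ strongly reversible $\Rightarrow$ reversible $\Rightarrow$ full is Theorem~\ref{prop:reversible}, which is stated and proved only for \emph{point-fibred} attractors: the proof of its statement (2) chooses $a\in A^{o}$, takes an address $\sigma$ of $a$, and uses that every point whose address begins with $\sigma|s$ lies in a small neighborhood of $a$; both steps require the coordinate map $\pi$ and the shrinking of the cylinder images $f_{\sigma|s}(A)$, i.e.\ point-fibredness. Theorem~\ref{thm:random} assumes only that $\mathbb{X}$ is compact and $F$ is invertible and just touching with $A^{o}\neq\emptyset$; point-fibredness is not among its hypotheses and can fail for non-contractive $F$. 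So your argument as written proves the contractive case (where point-fibredness is automatic) and the general case under the added hypothesis that $A$ is point-fibred, but to recover the theorem as stated you would either need to prove disjunctive $\Rightarrow$ full without using addresses (the crux being the existence of a finite word $w$ with $f_w(A)$ contained in a compact subset of $A^{o}$), or argue directly as the paper does via the chaos game, which sidesteps fullness entirely.
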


\begin{proof}
Let $x$ lie in the basin $B$ of the attractor $A$ of $F$, and let $\theta$ be
a random word.  Then $x$ lies in the union of the tiles of  $T_{\theta}$ if
and only if $x \in f^{-1}_{\theta|n}(A)$ for some $n$ if and only if
$f_{\theta_{n}}\circ f_{\theta_{n-1}} \circ\cdots\circ f_{\theta_{2}} \circ
f_{\theta_{1}}(x) \in A$ for some $n$. Given a word $\omega$, consider a
sequence $\left\{  x_{k}\right\}  _{k=0}^{\infty}$ of points in $\mathbb{X}$
defined by $x_{k}=f_{\omega_{k}}(x_{k-1}), \, k\geq1$. If $\omega$ is a random
word, then $\left\{  x_{k}\right\}  _{k=0}^{\infty}$ is called a
\textit{random orbit} of the point $x_{0}$. Since $\theta$ is assumed random,
if $y$ is any point in the interior of $A$ and $Q$ is an open neighborhood of
$y$ contained in $A$, then, according to \cite[Theorem 1]{BV2}, with
probability $1$ there is a point in the random orbit of any point $x$ in the
basin that lies in $Q \subset A$, i.e., with probability $1$, $f_{\theta_{n}%
}\circ f_{\theta_{n-1}} \circ\cdots\circ f_{\theta_{1}}(x) \in Q$ for some
$n$. Therefore, with probability $1$, there is a ball $B_{x}$ centered at $x$
such that
\[
f_{\theta_{n}}\circ f_{\theta_{n-1}} \circ\cdots\circ f_{\theta_{1}}(B_{x})
\subset Q \subset A.
\]

Next, for any $\epsilon>0$ let $\overline{B}_{\epsilon}$ be the open
$\epsilon$-neighborhood of the complement $\overline B$ of $B$, and let
$B_{\epsilon} = B\setminus\overline{B}_{\epsilon}$. Since $B_{\epsilon}$ is
compact, $B_{\epsilon}$ has a finite covering by balls $B_{x}$ and hence, with
probability $1$, there is an $n$ such that
\[
f_{\theta_{n}}\circ f_{\theta_{n-1}} \circ\cdots\circ f_{\theta_{1}%
}(B_{\epsilon}) \subset A.
\]
Therefore, for any $\epsilon>0$, with probability $1$, the tiling $T_{\theta}$
covers $B_{\epsilon}$. Assume that $T_{\theta}$ does not cover $B$. Then there
exists an $\epsilon$ such that $T_{\theta}$ does not cover $B_{\epsilon}$. But
the probability of that is $0$. If $F$ is contractive, the basin $B$ is
${\mathbb{X}}$.
\end{proof}

\begin{example}
(Digit tilings of ${\mathbb{R}}^{n}$) The terminology ``digit tiling'' comes
from the data used to construct the tiling, which is analogous to the usual
base and digits used to represent the integers. An \textit{expanding matrix}
is an $n\times n$ matrix such that the modulus of each eigenvalue is greater
than 1. Let $L$ be an $n\times n$ expanding integer matrix. A set $D =
\{d_{1}, d_{2}, \dots, d_{N}\}$ of coset representatives of the quotient
$\mathbb{Z}^{n}/L(\mathbb{Z}^{n})$ is called a \textit{digit set}. It is
assumed that $0 \in D$. By standard algebra results, for $D$ to be a digit set
it is necessary that
\[
|D| = |\det L \,|.
\]
Consider an affine IFS $F := F(L,D) = ({\mathbb{R}}^{n} \, ; \,f_{1}, f_{2},
\dots, f_{N})$, where
\[
f_{i}(x) = A^{-1}(x - d_{i}).
\]
Since $L$ is expanding, it is known that, with respect to a metric equivalent
to the Euclidean metric, $L^{-1}$ is a contraction. Since $F$ is contractive,
there is a unique attractor $A$ called a \textit{digit tile}. The basin of $A$
is all of ${\mathbb{R}}^{n}$. Note that a digit tile is completely determined
by the pair $(L,D)$ and will be denoted $T(L,D)$. It is known \cite{V1} that a
digit tile $T$ is the closure of its interior and its boundary has Lebesque
measure $0$. If $\theta\in[N]^{\infty}$ is full, then $T_{\theta}$ is a tiling
of ${\mathbb{R}}^{n}$ called a \textit{digit tiling}. It is staightforward to
show that, up to a rigid motion of ${\mathbb{R}}^{n}$,  a digit tiling does
not depend on $\theta$ as long as it is full. Examples of digit tilings, for
example by the twin dragon, appear in numerous books and papers on fractals.
Under fairly mild assumptions \cite[Theorem 4.3]{V1}, a digit tiling is a
tiling by translation by the integer lattice $\mathbb{Z}^{n}$ with the
following self-replicating property: for any tile $t\in T_{L,D}$, it's image
$L(t)$ is the union of tiles in $T_{L,D}$. For this reason, such a tiling is
often referred to as a \textit{reptiling} of ${\mathbb{R}}^{n}$.
\end{example}

\begin{example}
(Crysallographic tilings of ${\mathbb{R}}^{n}$) Gelbrich \cite{Ge} generalized
digit tiling from the lattice group $\mathbb{Z}^{n}$ to any crystallographic
group $\Gamma$. Let $L \, : \; {\mathbb{R}}^{n} \rightarrow{\mathbb{R}}^{n}$
be an expanding linear map such that $L\Gamma L^{-1} \subset\Gamma$. If $D =
\{d_{1}, \dots, d_{N}\}$ is a set of right coset representatives of
$\Gamma/L\Gamma L^{-1}$, then
\[
F = \{{\mathbb{R}}^{n} ; \, L^{-1} d_{1}, \dots, L^{-1} d_{N}\}
\]
is a contractive IFS with attractor $T(\Gamma,L,D)$ with non-empty interior,
called a \textit{crystallographic tile}. The Levy curve is an example of such
a crystallogrpahic tile (for the 2-dimensional crystallographic group $p4$). A
tiling $T_{\theta}$ is called a \textit{crystallographic reptiling}.
\end{example}

\begin{example}
\label{ex:chair}  (Chair tilings of ${\mathbb{R}}^{2}$) The IFS $F =
\{{\mathbb{R}}^{2} ;\, f_{1}, f_{2}, f_{3}, f_{4}\}$ where
\[
\begin{aligned} f_1(x,y) &= (x/2,y/2), \\ f_2(x,y) & = (x/2+1/4, y/2+1/4), \end{aligned}
\qquad\qquad
\begin{aligned} f_3(x,y) & = (-x/2 + 1,y/2), \\ f_4(x,y) & = (x/2,-y/2+1), \end{aligned}
\]
is an IFS whose attractor is a chair or "L"-shaped polygon; see
Figures~\ref{ch1} and \ref{ch2}.  The chair tilings are usually obtained by
what is referred to as a ``substitution method". For the chair tile, there are
uncountably many distinct (non-isometric) tilings $T_{\theta}$. There are
numerous other such polygonal tiles that are the attractors of just touching IFSs.
\end{example}

\begin{figure}[ptb]
\centering
\fbox{\includegraphics[width=12cm, keepaspectratio]
{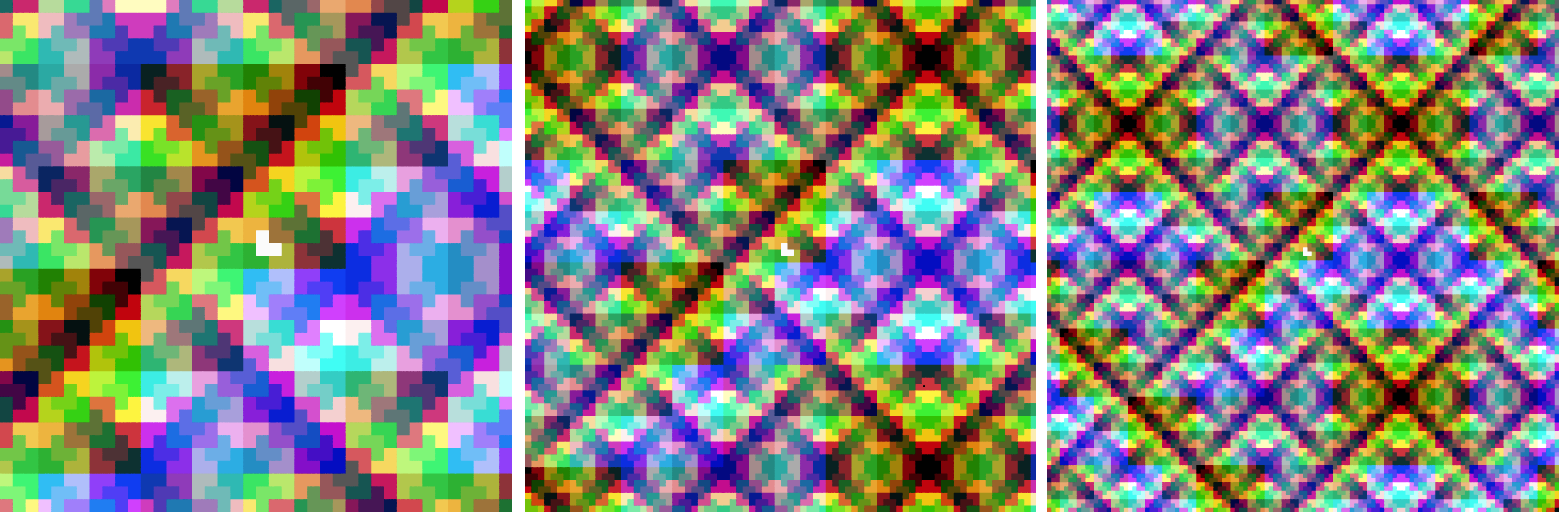}} \caption{Three views of the tiling in
Example~\ref{ex:chair} using $\theta= \overline{12301230}$. The L-shaped
attractor is shown in white near the center. The viewing windows are centered
at the origin, and are of width and height 20,40, and 60 from left to right.}%
\label{ch1}%
\end{figure}

\begin{figure}[ptb]
\centering
\fbox{\includegraphics[
natheight=14.221800in,
natwidth=14.221800in,
height=1.57in,
width=1.57in
]{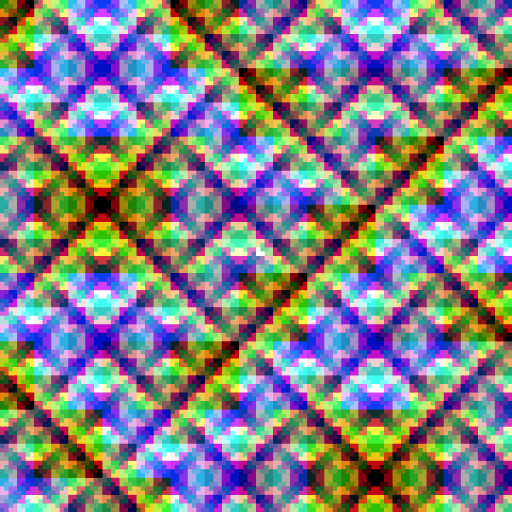}}\caption{Another tiling from the same IFS of
Example~\ref{ex:chair}, but using $\theta= \overline{12300312}$. }%
\label{ch2}%
\end{figure}

\begin{example}
(Fold out tiling) \label{ex:FO1} Let $E$ denote a point in the interior of the
filled square $\blacksquare=[0,1]^{2}$ with vertices $ABCD$. Let $P,Q,R,S$ be
the orthogonal projection of $E$ on $AB,BC,CD,DA$ respectively. Four affine
maps are uniquely defined by $f_{1}(ABCD)=APES,$ $f_{2}(ABCD)=BPEQ,$
$f_{3}(ABCD)=CREQ,$ and $f_{4}(ABCD)=DRES$. The attractor of $F_{E}
=\{\mathbb{R}^{2};f_{1},f_{2},f_{3},f_{4}\}$ is $\blacksquare$. The IFS
$F_{E}$ is of the type introduced in \cite{Ba}, pairs of which may be used to
describe fractal homeomorphisms on $\blacksquare$ (see Example~\ref{ex:FO2} of
Section~\ref{sec:FT}).  According to Theorem~\ref{thm:basinN}, for any full
word $\theta\in\{1,2,3,4\}^{\infty}$, we obtain a tiling $T_{\theta}$ of
${\mathbb{R}}^{2}$, one of which is shown in Figure~\ref{sqtilingthird01},
where $E=(2/3,1/3)$.

Since $a\in f_{i}(\blacksquare)\cap f_{j}(\blacksquare)\neq\emptyset$ implies
that $f_{i}^{-1}(a)=f_{j}^{-1}(a)$, the  mapping $T:$ $\blacksquare
\rightarrow$ $\blacksquare$ given by  $T\left(  x\right)  =f_{i}^{-1}(x)$ when
$x\in f_{i}($ $\blacksquare)$ is well defined and continuous. It can be said
that $f_{i}^{-1}$ applied to $\blacksquare$ causes $\blacksquare$ to be
``continuously folded out from $\blacksquare$". The tilings $T_{\theta}$ can
be thought of as repeated applications of such folding-outs.

\begin{figure}[tbh]
\centering
\fbox{\includegraphics[
natheight=14.221800in,
natwidth=14.221800in,
height=2.1603in,
width=2.1603in
]{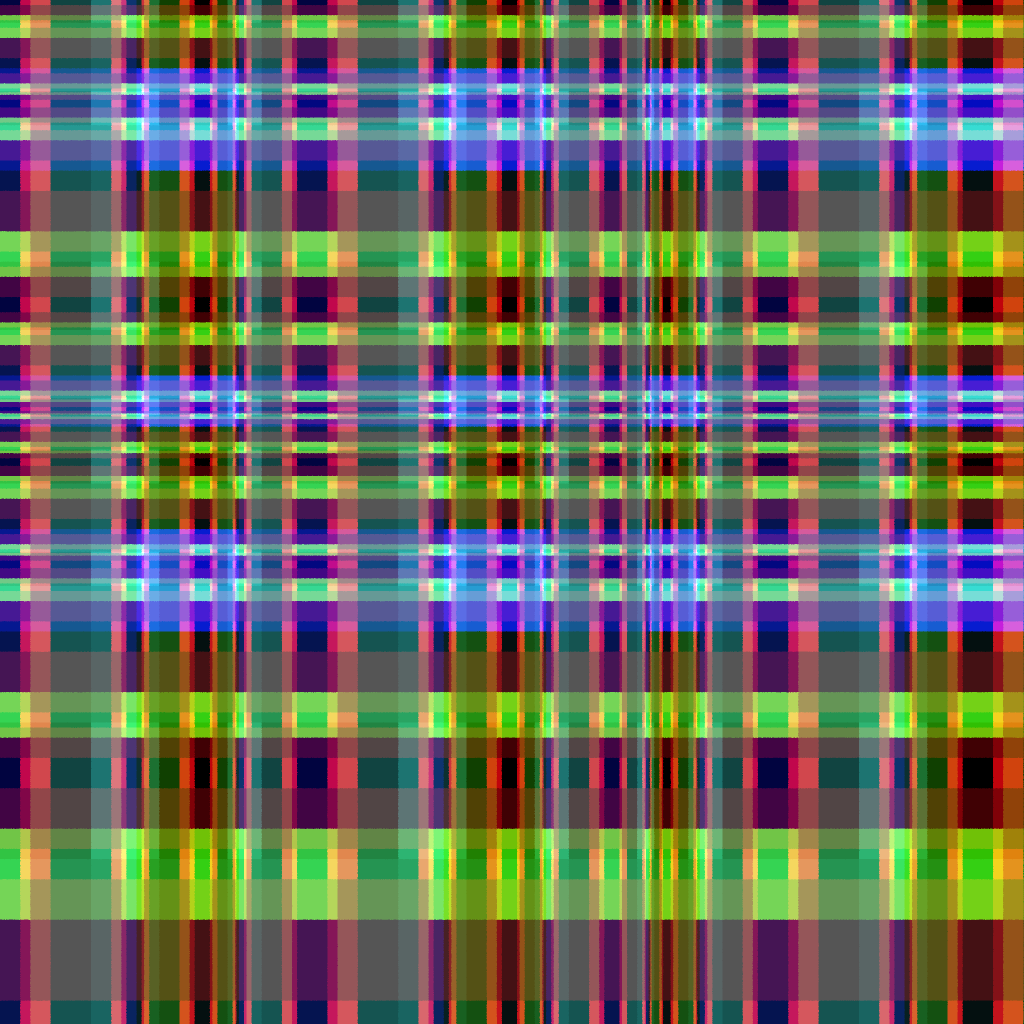}} \caption{The tiling of Example~\ref{ex:FO1}}%
\label{sqtilingthird01}%
\end{figure}
\end{example}

\begin{example}
\label{ex:tri} (Triangular affine and projective tilings)
Figure~\ref{twotilings} shows two views of the same affine tiling of
$\mathbb{R}^{2}$. As in Example~\ref{ex:FO1},  this tiling can be used to
extend a fractal homeomorphism between two triangular attractors to a fractal
homeomorphism of the Euclidean plane (see Section~\ref{sec:FT}). Consider the
IFS $F =\{{\mathbb{R}}^{2};f_{1},f_{2},f_{3},f_{4}\}$ where each $f_{n}$ is an
affine transformations defined as follows. Let $A$, $B$, and $C$ denote three
noncollinear points in $\mathbb{R}^{2}$. Let $c$ denote a point on the line
segment $AB$, $a$ a point on the line segment $BC$, and $b$ a point on the
line segment $CA$, such that $\{a,b,c\}\cap\{A,B,C\}=\emptyset$; see panel (i)
of Figure~\ref{homtrisel}. Let $f_{1}:\mathbb{R}^{2}\rightarrow\mathbb{R}^{2}$
denote the unique affine transformation such that
\[
f_{1}(ABC)=Abc\text{,}%
\]
by which we mean that $f_{1}$ maps $A$ to $A$, $B$ to $b$, and $C$ to $c$.
Using the same notation, let affine transformations $f_{2}$, $f_{3}$, and
$f_{4}$ be the ones uniquely defined by
\[
\begin{aligned}
f_{2}(ABC) &=aBc, \\ f_{3}(ABC) &=abC, \\ f_{4}(ABC) &=abc.
\end{aligned}
\]
Panel (ii) of Figure~\ref{homtrisel} shows the images of the points $A,B,C$
under the four functions of the IFS, illustrating the special way that the
four functions fit together. The attractor of $F$ is the filled triangle with
vertices at $A$, $B$, and $C$, which we will denote by $\blacktriangle$,
illustrated in (iii) in Figure \ref{homtrisel}.

\begin{figure}[tbh]
\includegraphics[width=5in, keepaspectratio]{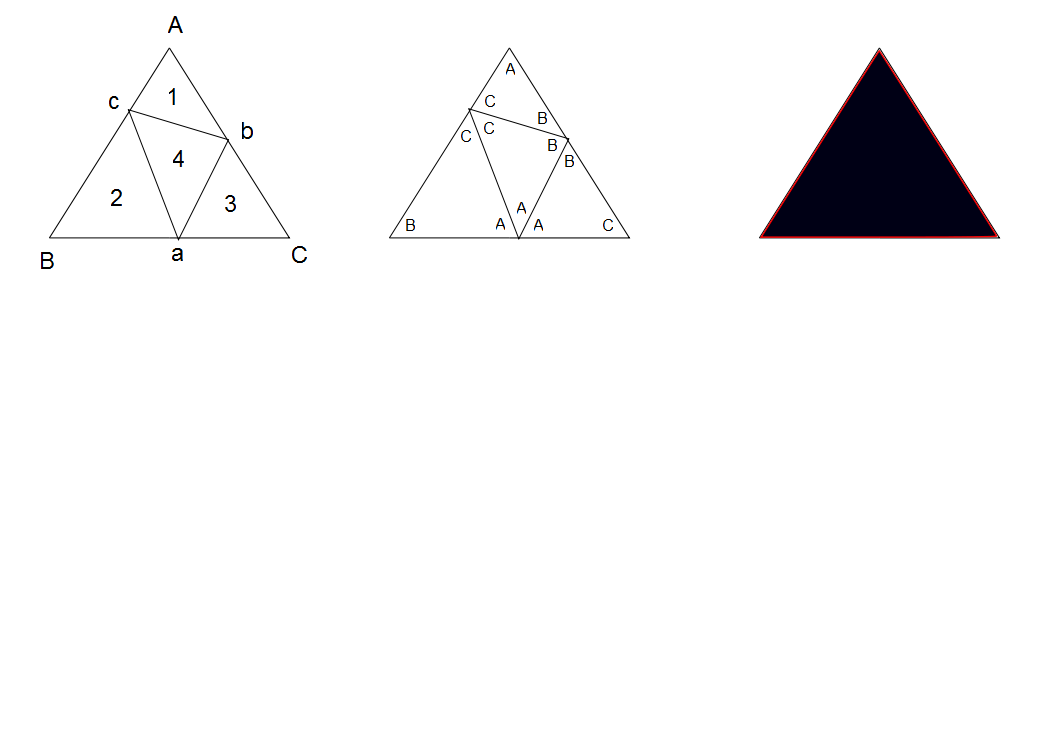} \vskip -5cm
\caption{(i) The points used to define the affine transformations of the IFS
$F=\{\mathbb{R}^{2};f_{1},f_{2},f_{3},f_{4}\}$; (ii) images of the the
triangle $ABC$; (iii) the attractor of the IFS $\{\mathbb{R}^{2};\,
f_{1},f_{2},f_{3}, f_{4}\}$.}%
\label{homtrisel}%
\end{figure}

\begin{figure}[tbh]
\centering
\fbox{\includegraphics[
natheight=14.221800in,
natwidth=28.666800in,
height=2.5175in,
width=5.0462in
]{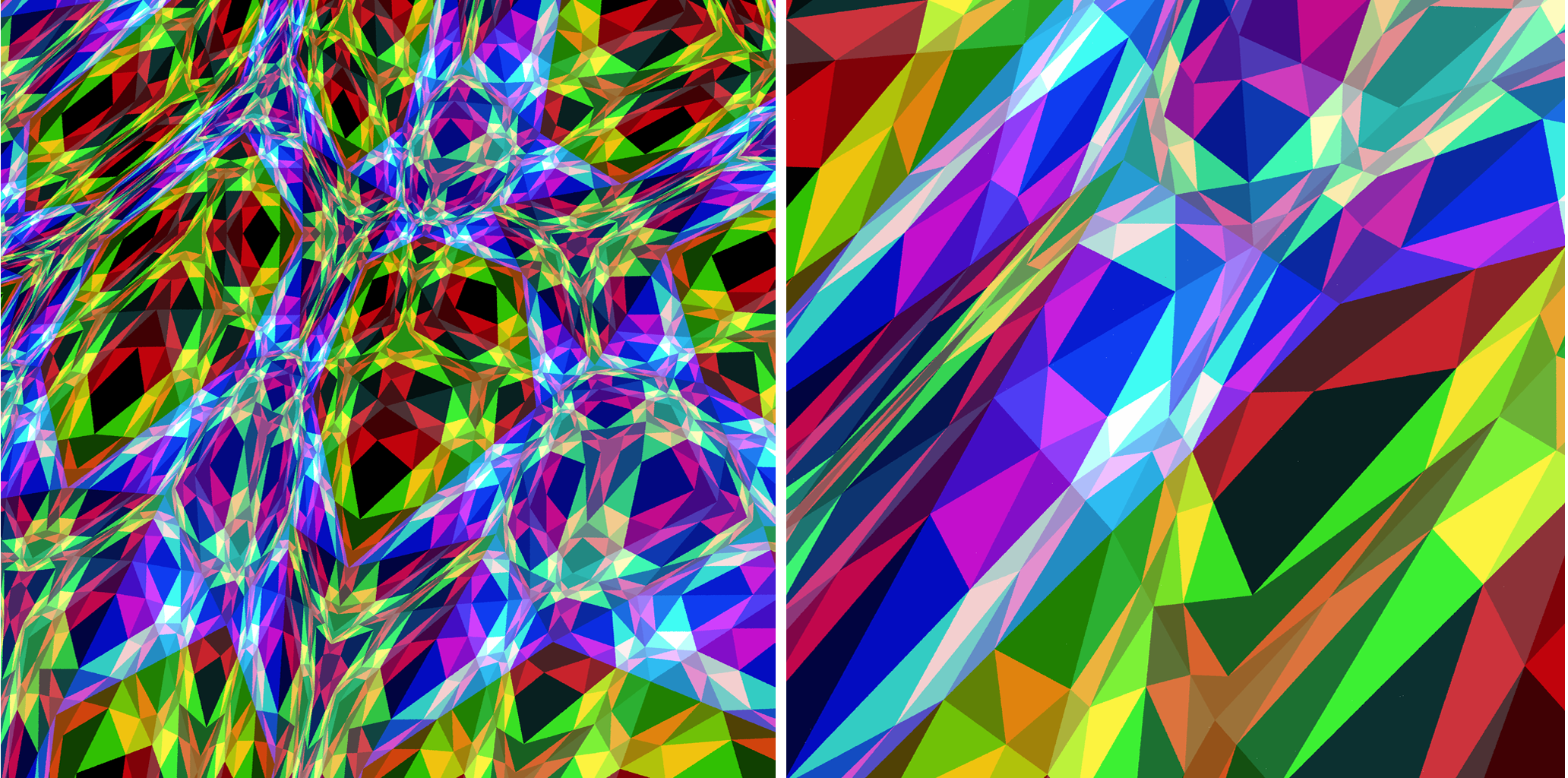}}\caption{The tiling of Example~\ref{ex:tri}. Each image
shows a portion of the same tiling of $\mathbb{R}^{2}$ generated by an affine
IFS with a triangular attractor. All tiles are triangles (the black
quadrilateral is the union of two black triangular tiles).}%
\label{twotilings}%
\end{figure}

If $\theta=\theta_{1}\theta_{2}\theta_{3} \cdots\in\{1,2,3,4\}^{\infty}$ is
full, then according to Theorem~\ref{thm:basinN}, $T_{\theta}$ is a tiling of
${\mathbb{R}}^{2}$ by triangles. One such affine tiling is illustrated in
Figure \ref{twotilings} at two resolutions. A related projective tiling is
shown in Figure~\ref{tilings}. Note that, if $e$ is a common edge of two
triangles $\Delta_{1},\Delta_{2}$ in $T_{\theta}$, then $e$ is the image of
the same edge of the original triangle $ABC$ from $\Delta_{1}$ and $\Delta
_{2}$, and if $v$ is a common vertex in $T_{\theta}$ of two triangles
$\Delta_{2},\Delta_{2}$ in $T_{\theta}$, then $v$ is the image of the same
vertex of the original triangle $ABC$ from $\Delta_{1}$ and $\Delta_{2}$.

\begin{figure}[tbh]
\centering
\fbox{\includegraphics[
width=2.5in, keepaspectratio
]{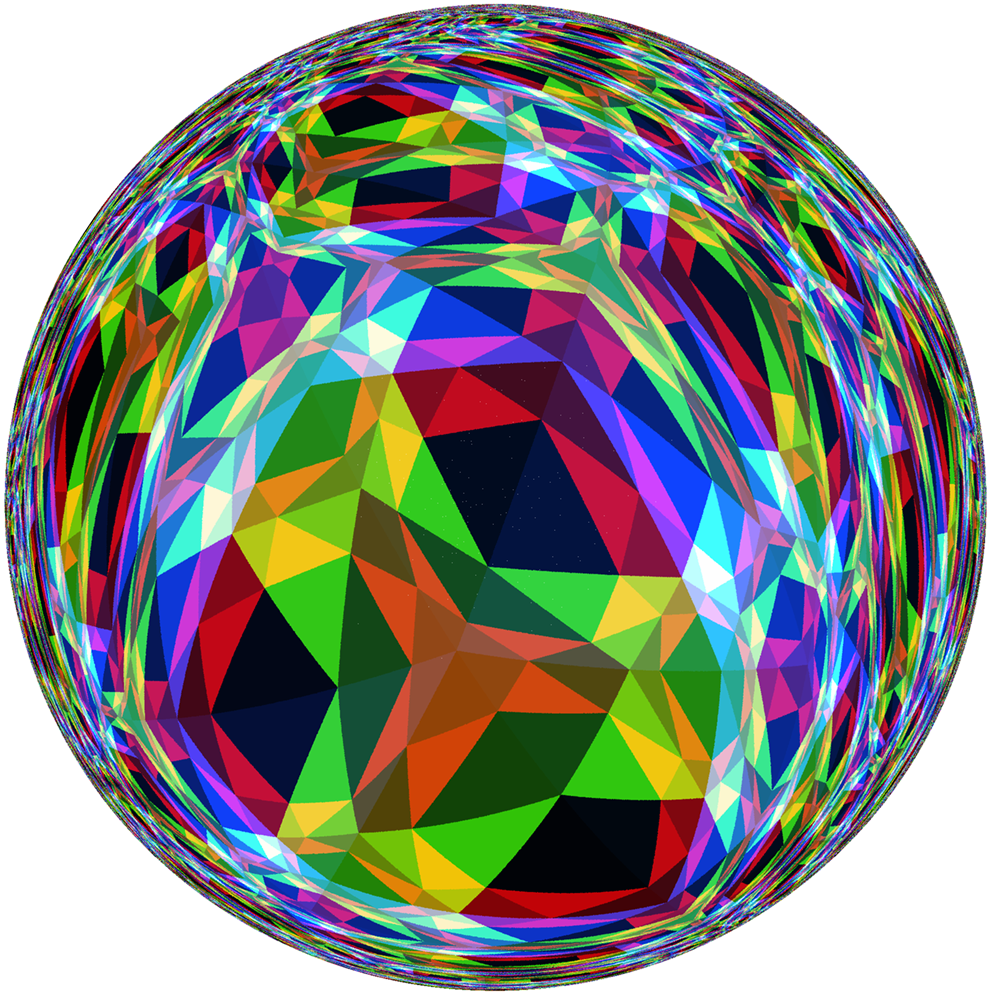}} \vskip 3mm \caption{The image on the left illustrates a
projective tiling constructed an a fashion analogous to that of the tiling in
Figure~\ref{twotilings}. It is represented using the disk model for
$\mathbb{R}P^{2}$.}%
\label{tilings}%
\end{figure}
\end{example}

Although Theorem~\ref{thm:basinN} and Theorem~\ref{thm:random} hold for IFSs
whose atttractor has nonempty interior, the basic tiling construction of this
section applies as well to IFSs with an attractor with empty interior.

\begin{example}
\label{ex:empty1}(Tilings from an attractor with empty interior)
Figure~\ref{fig:Sier} shows a tiling by copies of the Sierpinski gasket.
\begin{figure}[tbh]
\centering
\fbox{\includegraphics[width=2.5in, keepaspectratio]{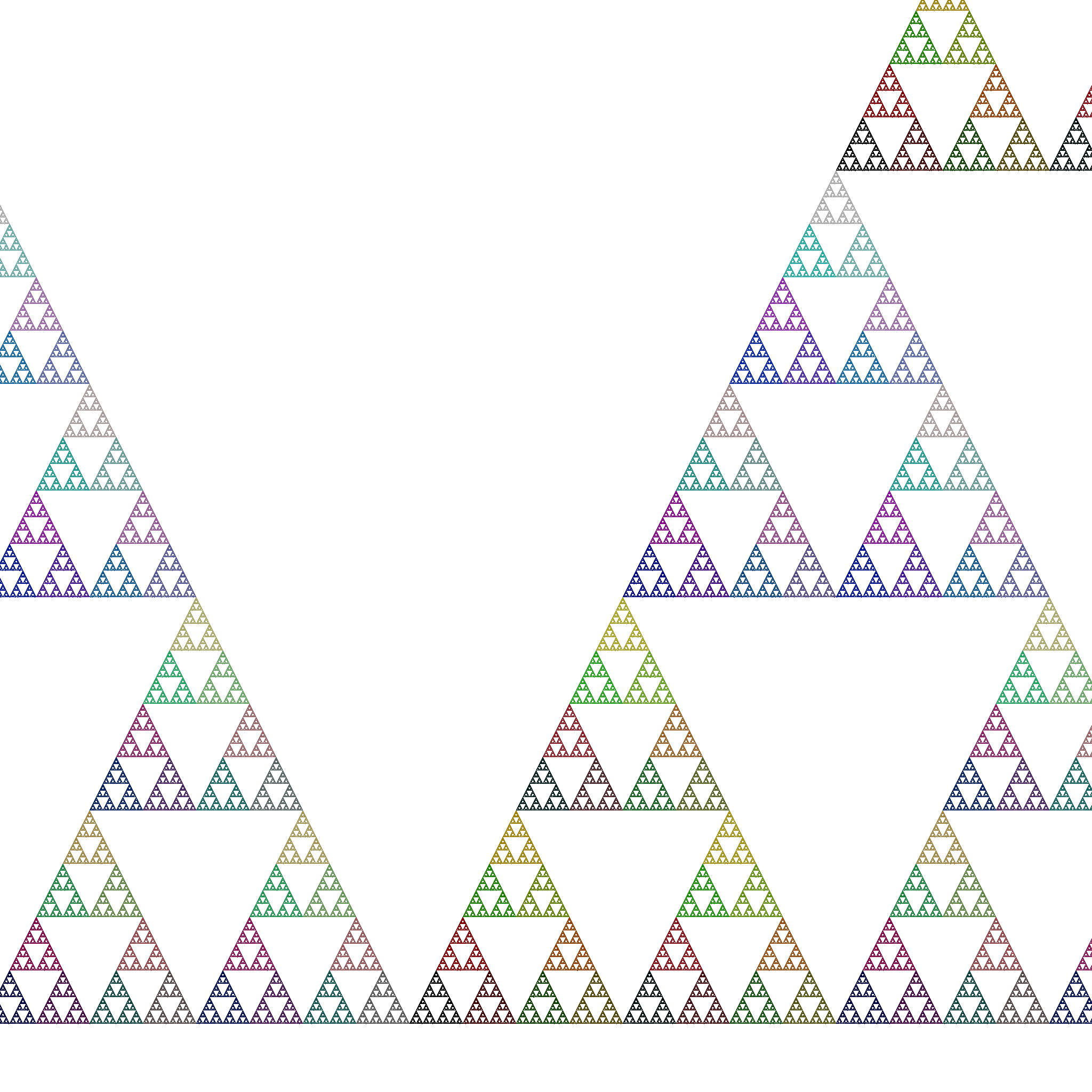}}
\vskip 3mm\caption{The original attractor is the small black Sierpinski gasket
near the middle bottom; $\theta=322222\cdots$.}%
\label{fig:Sier}%
\end{figure}

Figures \ref{nett01} and \ref{nettz01} show the tiling $T_{\theta}$, for a
particular $\theta$, for the IFS $\left\{  \mathbb{R}^{2};f_{i}%
,i=1,2,3\right\}  $ where%
\begin{align*}
f_{1}(x,y)  &  =(-0.7x+0.7,0.65y+0.35),\\
f_{2}(x,y)  &  =(-0.3y+1,-0.6x-0.3y+1.3),\\
f_{3}(x,y)  &  =\left(  0.375y+0.325,-0.6x+0.35y+0.65\right)  .
\end{align*}

\end{example}

\begin{figure}[ptb]
\centering
\fbox{\includegraphics[
natheight=3.058800in,
natwidth=3.058800in,
height=3.0588in,
width=3.0588in]{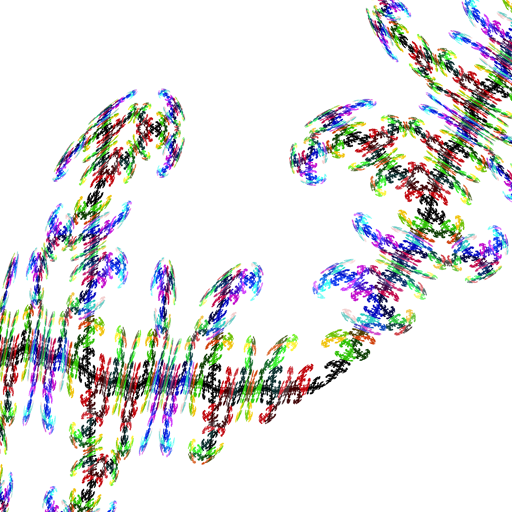}}\caption{The tiling of Example~\ref{ex:empty1}}%
\label{nett01}%
\end{figure}

\begin{figure}[ptb]
\vskip 1cm \centering
\fbox{\includegraphics[
natheight=3.394400in,
natwidth=3.083100in,
height=3.3944in,
width=3.0831in]{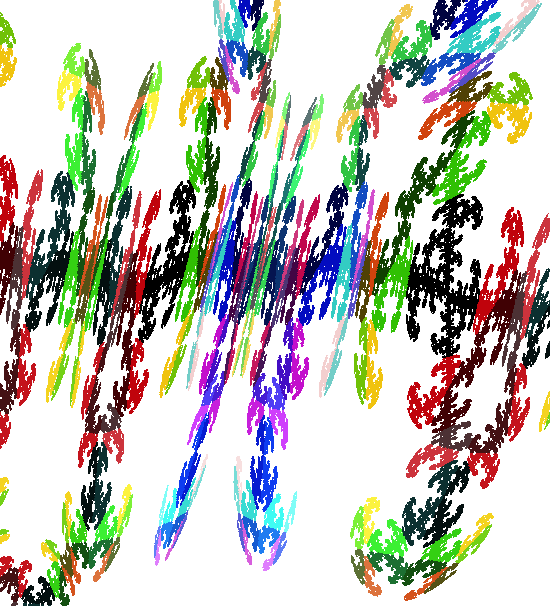}}\caption{Zoom of the tiling in
Figure~\ref{nett01}}%
\label{nettz01}%
\end{figure}

\begin{example}
(A tiling of a fractal continuation of a fractal function) A fractal function,
defined say on the unit interval, is a function, often everywhere
non-differentiable, whose graph is the attractor of an IFS.
Figure~\ref{fig:FC} shows a tiling, obtained by the method of this section, by
copies of the graph of such a function. Such extensions of fractal functions
are essential to the notion of a fractal continuation, a generalization of
analytic continuation of an analytic function; see \cite{BV3}.

\begin{figure}[tbh]
\vskip -3mm
\centering
\fbox{\includegraphics[width=2.5in, keepaspectratio]{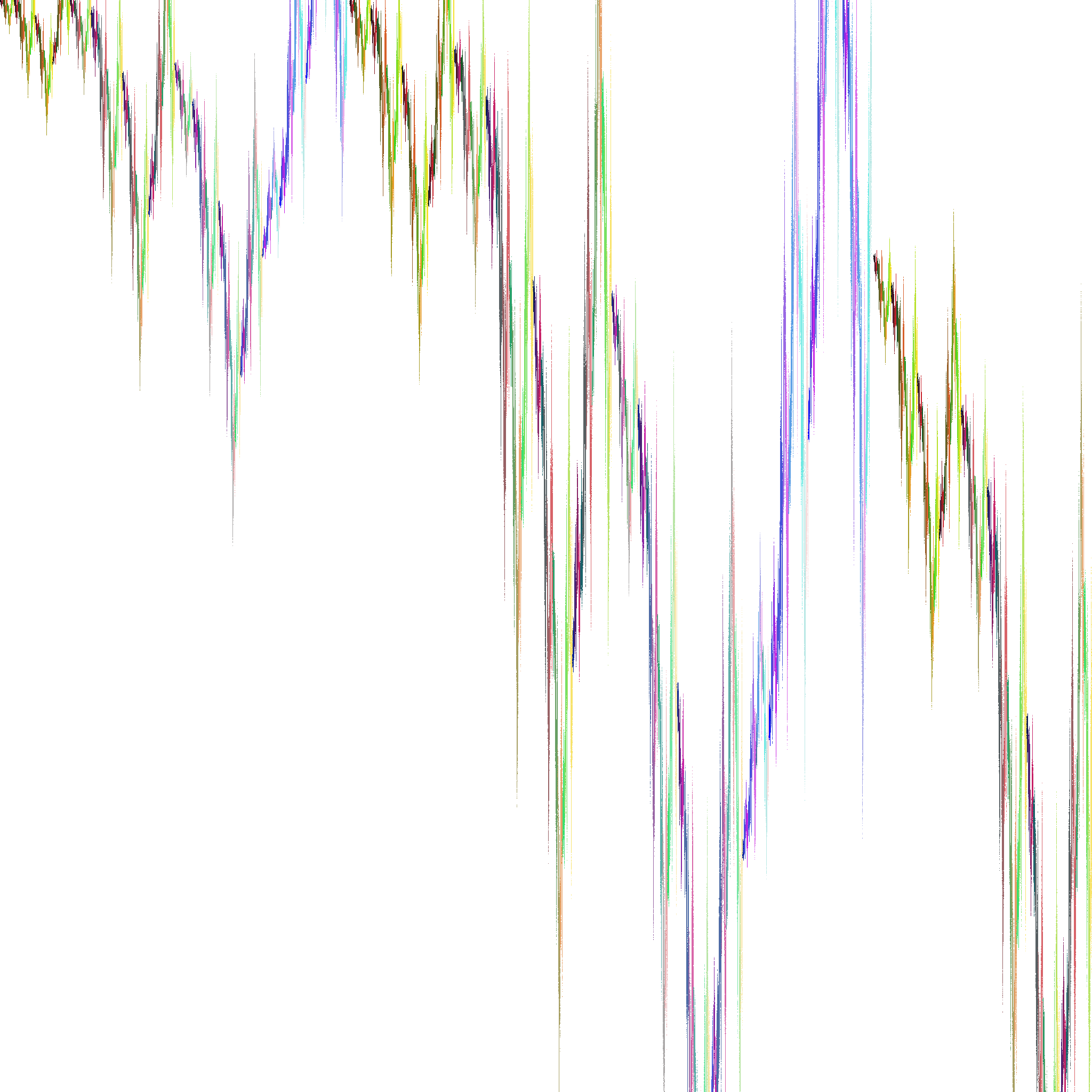}}
\vskip 3mm\caption{A tiling of the graph of a continuation  \cite{BV3} of a
fractal function by copies of the graph of the fractal function.}%
\label{fig:FC}%
\end{figure}
\end{example}
\vskip -5mm

\section{Fractal Tilings from an Overlapping IFS}

\label{sec:O}

In this section the concept of a mask is used to generalize the tilings of the
previous section from the non-overlapping to the overlapping case.

\begin{definition}
\label{def:mask} For an IFS $F =\left(  \mathbb{X};f_{1},f_{2},...,f_{N}%
\right)  $ with attractor $A$, a \textbf{mask} ${\mathcal{M}} = \{M_{i}, 1\leq
i \leq N\}$ is a tiling of $A$ such that $M_{i} \subseteq f_{i}(A)$ for all
$f_{i}\in F$.
\end{definition}

Let $F$ be an invertible IFS with attractor $A$, and let $\theta\in\lbrack
N]^{\infty}$. Let ${\mathcal{M}}=\{M_{1},M_{2},\dots,M_{N}\}$ be any mask of
$A$ with the property that $M_{\theta_{1}}=f_{\theta_{1}}(A)$. Define
recursively a sequence of masked IFSs $F_{n}=\{{\mathbb{X}};\,f_{n,1}%
,f_{n,2},\dots,f_{n,N}\}$ with respective masks ${\mathcal{M}}_{n}$ and
attractors $A_{n}$ and associated tilings $T_{n}$, for $n=1,2,\dots$, as
follows: $F_{1}=F,\,A_{1}=A,\,T_{1}=\{A\},\,{\mathcal{M}}_{1}={\mathcal{M}}$,
and
\begin{equation}
\begin{aligned} T_{n+1} &:= \left\{ (f_{n,\theta_n}^{-1} ( f_{n,i} (t) \cap M_{n,i}) \,:\, i=1,2,\dots, N,\, t\in T_n\right \} \\ F_{n+1} &:= \left \{ {\mathbb X}; \ f_{n,\theta_n}^{-1}\circ f_{n,i}\circ f_{n,\theta_n} \, : \, i=1,\dots ,N \right \}, \\ A_{n+1} &:= f^{-1}_{n,\theta_n}(A_n) \\ {\mathcal M}_{n+1} &:= \{ M_{n+1,1}, \dots , M_{n+1,N}\}, \quad \text{where} \\ & \begin{cases} M_{n+1,\theta_{n+1}} = f_{n+1,\theta_{n+1}}(A_{n+1}), \\ M_{n+1,j} = f_{n,\theta_n}^{-1}(M_{n,j}) \setminus f_{n+1,\theta_{n+1}}(A_{n+1}) \quad \text{if }\; j \neq \theta_{n+1} \end{cases} \\ \ddots \end{aligned}\label{eq:over}%
\end{equation}
The following proposition is not hard to verify.


\begin{prop}
For all $n=1,2,\dots$, we have

\begin{enumerate}
\item $A_{n}$ is the attractor of $F_{n}$;

\item ${\mathcal{M}}_{n}$ is a mask for $A_{n}$;

\item $T_{n}\subset T_{n+1}$.
\end{enumerate}
\end{prop}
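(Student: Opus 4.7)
The plan is to prove all three statements by simultaneous induction on $n$, with the base case $n=1$ immediate from the definitions and the standing assumption $M_{\theta_{1}}=f_{\theta_{1}}(A)$ built into the choice of mask $\mathcal{M}_{1}$. Throughout I write $g_n := f_{n,\theta_n}$ for brevity. The crucial structural observation, visible directly from the recursion, is that $F_{n+1}$ is the topological conjugate of $F_n$ by the homeomorphism $g_n$, in the sense that $f_{n+1,i} = g_n^{-1}\circ f_{n,i}\circ g_n$ for every $i$.

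For (1), this conjugacy together with the inductive hypothesis $F_n(A_n)=A_n$ gives
\[
F_{n+1}(A_{n+1}) = \bigcup_{i} g_n^{-1}\circ f_{n,i}\circ g_n\bigl(g_n^{-1}(A_n)\bigr) = g_n^{-1}\bigl(F_n(A_n)\bigr) = g_n^{-1}(A_n) = A_{n+1},
\]
so $A_{n+1}$ is fixed by $F_{n+1}$. The transfer of the convergence clause of Definition~\ref{def:attractor} from $F_n$ to its conjugate $F_{n+1}$ is immediate because $F_{n+1}^{k}(S) = g_n^{-1}(F_n^{k}(g_n(S)))$ and $g_n^{-1}$ is a homeomorphism, so $g_n^{-1}$ carries the basin of $A_n$ bijectively onto a basin of $A_{n+1}$.

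For (3), I would exploit the inductive form of the mask condition on the distinguished tile, namely $M_{n,\theta_n}=g_n(A_n)$. For any $t\in T_n$, an easy sub-induction gives $t\subseteq A_n$, so $g_n(t)\subseteq g_n(A_n)=M_{n,\theta_n}$. Hence the choice $i=\theta_n$ in the defining formula for $T_{n+1}$ yields $g_n^{-1}(g_n(t)\cap M_{n,\theta_n}) = g_n^{-1}(g_n(t))=t$, showing $t\in T_{n+1}$.

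For (2), the containment $M_{n+1,j}\subseteq f_{n+1,j}(A_{n+1})$ holds by definition for $j=\theta_{n+1}$, and for $j\neq\theta_{n+1}$ it follows from $g_n^{-1}(M_{n,j})\subseteq g_n^{-1}(f_{n,j}(A_n)) = f_{n+1,j}(A_{n+1})$, using the inductive mask condition $M_{n,j}\subseteq f_{n,j}(A_n)$. The union $\bigcup_j M_{n+1,j}=A_{n+1}$ follows because $\{g_n^{-1}(M_{n,j})\}_j$ is the image of the tiling $\mathcal{M}_n$ of $A_n$ under the homeomorphism $g_n^{-1}$, hence already covers $A_{n+1}$, and the set-difference in the definition of $M_{n+1,j}$ for $j\neq\theta_{n+1}$ only discards points simultaneously collected into $M_{n+1,\theta_{n+1}}$. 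The main obstacle is the non-overlap condition $(M_{n+1,i}\cap M_{n+1,j})^o=\emptyset$ for $i\neq j$: when one of the indices equals $\theta_{n+1}$ the two sets are actually disjoint by construction; for $i,j\neq\theta_{n+1}$ one has $M_{n+1,i}\cap M_{n+1,j}\subseteq g_n^{-1}(M_{n,i}\cap M_{n,j})$, and its interior vanishes precisely because $g_n^{-1}$ is a homeomorphism (so interior commutes with $g_n^{-1}$) and $\mathcal{M}_n$ is a tiling of $A_n$ by the inductive hypothesis.
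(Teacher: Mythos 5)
Your proof is correct and is essentially the argument the authors intend: the paper itself offers no proof (it merely remarks that the proposition ``is not hard to verify''), and your observation that $F_{n+1}$ is the conjugate of $F_{n}$ by $g_{n}=f_{n,\theta_{n}}$, combined with the facts $M_{n,\theta_{n}}=g_{n}(A_{n})$ and $t\subseteq A_{n}$ for every $t\in T_{n}$, cleanly yields all three statements. The one loose end --- that the set differences defining $M_{n+1,j}$ for $j\neq\theta_{n+1}$ need not be closed or nonempty, so ${\mathcal M}_{n+1}$ is a ``tiling'' only in the paper's relaxed sense --- is inherited from the paper's own definitions (compare its ``tops mask'' example) and is not a gap in your argument.
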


In light of statement (3), define, with respect to $F$ and $A$:
\[
T_{\theta} := T_{{\mathcal{M}},\theta} = \bigcup_{n=1}^{\infty}T_{n}.
\]
If an attractor $A$ of an IFS $F$ is non-overlapping, then
\[
{\mathcal{M}} := \left\{  f_{i}(A) \, : \, i = 1,2, \dots, N \right\}
\]
is a mask for $A$. It is not hard to verify that, in this case, the tiling
$T_{{\mathcal{M}},\theta}$ is exactly the tiling $T_{\theta}$ defined by
Equation~\eqref{eq:T} in Section~\ref{sec:non-overlapping}.

\begin{theorem}
\label{thm:basinO} Let $F=\left(  \mathbb{X}\, ; \, f_{1},f_{2},...,f_{N}%
\right) $ be an invertible IFS with attractor $A$ with non-empty interior and
mask $\mathcal{M}$. If $\theta$ is a full word, then the tiling
$T_{{\mathcal{M}, \theta}}$ covers the basin $B(A)$. If $F$ is contractrive,
then $T_{{\mathcal{M}, \theta}}$ is a full tiling of ${\mathbb{X}}$.
\end{theorem}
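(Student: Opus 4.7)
The plan is to reduce the theorem to showing that an orbit eventually enters $A$. The Proposition immediately preceding the theorem tells us that each $T_n$ tiles $A_n$ and that $T_n\subset T_{n+1}$, so $T_{\mathcal{M},\theta}=\bigcup_{n\ge 1}T_n$ is automatically a tiling of $\bigcup_n A_n$. A short induction on the recursion $A_{n+1}=f_{n,\theta_n}^{-1}(A_n)$ identifies $A_n=(f^{-1})_{\theta|n-1}(A)$, so writing $g_m:=f_{\theta_m}\circ\cdots\circ f_{\theta_1}$, the inclusion $x\in\bigcup_n A_n$ is equivalent to $g_m(x)\in A$ for some $m$. The first statement therefore reduces to proving that the hitting time $\inf\{m:g_m(x)\in A\}$ is finite for every $x\in B(A)$.

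Fullness is used to manufacture, out of $\theta$, a single continuous map that pushes $A$ into its own interior. By Theorem~\ref{prop:reversible}(4), $\theta$ is reversible, so there is $\omega\in[N]^{\infty}$ with $\pi(\omega)\in A^{\circ}$ whose initial segments, reversed, occur as subwords of $\theta$ infinitely often. Because $A$ is point-fibred, $f_{\omega|k}(A)\to\{\pi(\omega)\}$ in the Hausdorff metric, so one may fix an integer $L$ with $A':=f_{\omega|L}(A)\subset A^{\circ}$; reversibility then furnishes an infinite set $S$ of indices $m$ for which the block composition $f_{\theta_{m+L}}\circ\cdots\circ f_{\theta_{m+1}}$ equals the \emph{fixed} map $h^{*}:=f_{\omega|L}$, so that $g_{m+L}(x)=h^{*}(g_m(x))$ for every $m\in S$.

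The final ingredient is the basin property: since $\{x\}\in\mathbb{H}(B(A))$, Definition~\ref{def:attractor} gives $F^m(\{x\})\to A$ in the Hausdorff metric, and since $g_m(x)=f_{\overleftarrow{\theta|m}}(x)\in F^m(\{x\})$ one has $d(g_m(x),A)\to 0$. Choose $\delta>0$ so that the closed $\delta$-neighborhood of $A'$ lies in $A^{\circ}$. A routine sequential-compactness argument, using continuity of the single map $h^{*}$ together with compactness of $A$, produces $\epsilon>0$ such that $d(y,A)<\epsilon$ implies $d(h^{*}(y),A')<\delta$. Picking any $m\in S$ with $d(g_m(x),A)<\epsilon$ then yields $g_{m+L}(x)\in A^{\circ}\subset A$, so $x\in A_{m+L+1}$, completing the covering of $B(A)$. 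The second statement is then immediate: if $F$ is contractive, Hutchinson's theorem forces $B(A)=\mathbb{X}$, and the first conclusion promotes $T_{\mathcal{M},\theta}$ to a full tiling of $\mathbb{X}$.

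The step I expect to be the main obstacle is the continuity argument in the last paragraph, because the first part of the theorem does not assume $F$ contractive, so a long composition $f_{\theta_n}\circ\cdots\circ f_{\theta_1}$ could a priori have an unbounded modulus of continuity. What rescues the argument is that reversibility pins down \emph{one} fixed map $h^{*}$ that works for infinitely many $m$, so no compounding of moduli occurs and continuity of that single map on the compact set $A$ is all that is needed.
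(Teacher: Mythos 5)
Your proof is correct and follows the same overall strategy as the paper's: reduce the covering statement to showing that the orbit $f_{\theta_m}\circ\cdots\circ f_{\theta_1}(x)$ eventually lands in $A$, use the attractor/basin property to get $d(g_m(x),A)\to 0$, and use the hypothesis on $\theta$ to push a whole neighborhood of $A$ into $A^{o}$. The one genuine difference is the step you flagged yourself. The paper works directly from the definition of full: it picks, for a given $M$, a pair $n>m\geq M$ with $f_{\theta_n}\circ\cdots\circ f_{\theta_{m+1}}(A)\subset A'$, then asserts the existence of an $\epsilon_0$-neighborhood $A_{\epsilon_0}$ of $A$ with $f_{\theta_n}\circ\cdots\circ f_{\theta_{m+1}}(A_{\epsilon_0})\subset A^{o}$, and finally requires $M\geq M_{\epsilon_0}$. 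As written this is circular unless $\epsilon_0$ can be chosen uniformly over the infinitely many block compositions, which the paper does not justify (the blocks vary with $M$, and no equicontinuity is assumed). Your detour through Theorem~\ref{prop:reversible}(4) --- replacing the varying blocks by the single fixed map $h^{*}=f_{\omega|L}$ occurring infinitely often --- makes the continuity estimate apply to one map only and closes that gap; the cost is that you need the point-fibred property to define $\omega$ and to get $f_{\omega|L}(A)\subset A^{\circ}$, a hypothesis the theorem statement does not explicitly list (though the paper's own equivalence of full and reversible already relies on it). In short: same architecture, but your version is the more rigorous of the two at the uniformity step.
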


\begin{proof}
It is sufficient to show that the basin $B:=B(A)$ is covered by the tiling,
so let $x\in B$. According to the third line of Equation~\eqref{eq:over}, the
tiling $T_{{\mathcal{M}, \theta}}$ covers
\[
\begin{aligned} & \cdots \left [ ( f_{\theta_1}^{-1}  f_{\theta_2}^{-1}  f_{\theta_1} )(  f_{\theta_1}^{-1}  f_{\theta_3}^{-1}  f_{\theta_1} ) (  f_{\theta_1}^{-1}  f_{\theta_2}  f_{\theta_1})\right ](  f_{\theta_1}^{-1}  f_{\theta_2}^{-1}  f_{\theta_1}) f_{\theta_1}^{-1} (A)  \\ & \hskip 10mm =  f_{\theta_1}^{-1}  f_{\theta_2}^{-1}  f_{\theta_3}^{-1} \cdots  f_{\theta_n}^{-1} (A) \end{aligned}
\]
for any $n$. So it sufices to show that $x \in f_{\theta_{1}}^{-1}
f_{\theta_{2}}^{-1} f_{\theta_{3}}^{-1} \cdots f_{\theta_{n}}^{-1} (A)$ for
some $n$, or equivalently
\[
f_{\theta_{n}} f_{\theta_{n-1}} \cdots f_{\theta_{1}} (x) \in A.
\]

By the definition of attractor, for any $\epsilon> 0$ there is an
$M_{\epsilon}$ such that if $m \geq M_{\epsilon}$, then
\[
F^{m}(x) \subset A_{\epsilon},
\]
where $A_{\epsilon}$ is the open $\epsilon$-neighborhood of $A$. Because
$\theta$ is assumed to be full, there exists a compact $A^{\prime}$ with
$A^{\prime o}$ with the property that, for any $M$ there exist $n>m\geq M$
such that
\[
f_{\theta_{n}}\circ\cdots\circ f_{\theta_{m+1}}(A) \subset A^{\prime o}.
\]
This implies that there exists an $\epsilon_{0}$-neighborhood $A_{\epsilon
_{0}}$ of $A$ such that
\[
f_{\theta_{n}}\circ\cdots\circ f_{\theta_{m+1}}(A_{\epsilon_{0}}) \subset
A^{o},
\]
for some $\epsilon_{0}>0$. If $M \geq M_{\epsilon_{0}}$, then
\[
f_{\theta_{n}}\circ\cdots\circ f_{\theta_{m+1}}\circ f_{\theta_{m}}
\circ\cdots\circ f_{\theta_{1}}(x) \in f_{\theta_{n}}\circ\cdots\circ
f_{\theta_{m+1}}(A_{\epsilon_{0}}) \subset A,
\]
as required.
\end{proof}

\begin{example}
A portion of a one-dimensional masked tiling, illustrated in Figure
\ref{tilingstrip002}, is generated by the overlapping IFS
\[
F=\left\{  \mathbb{R}:f_{1}(x)=bx,f_{2}(x)=bx+(1-b)\right\}
\]
with $b=0.65$. The unique attractor of $F$ is $A=[0,1]$. The mask for $A$
(w.r.t. $F$) is $\{M_{1}=[0,b],M_{2}=[b,1]\}$. The left-most tile is black and
corresponds to the interval $A$. \medskip

In this and other pictures, different colors represent different tiles. Some
colors may be close together. The images are approximate.
\end{example}

\begin{figure}[tbh]
\centering
\fbox{\includegraphics[
natheight=3.555200in,
natwidth=7.111400in,
height=0.528in,
width=2.0357in
]{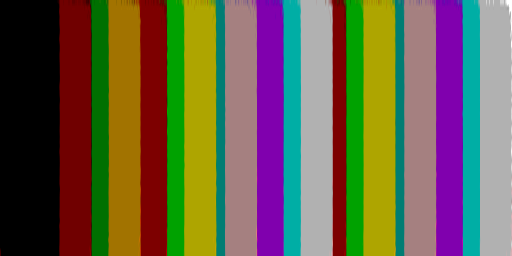}} \caption{Part of a tiling of $\mathbb{R}$ generated by
a masked IFS.}%
\label{tilingstrip002}%
\end{figure}

\begin{example}
Figures \ref{tilingsquare002} ($b=0.65$) and \ref{sqtilingpt9pt} ($b=0.9$)
illustrate masked tilings of $\mathbb{[}0,\infty)^{2}\subset\mathbb{R}^{2}$
associated with the family of IFSs $F=\left\{  \mathbb{[}0,\infty)^{2}%
:f_{1},f_{2},f_{3},f_{4}\right\}  $ where%
\[
\begin{aligned}
f_{1}(x,y)  &  = (bx,by), \\  f_{3}(x,y)  &  = (bx,by+l), \end{aligned} \qquad
\begin{aligned} f_{2}(x,y) &=(bx+l,by) \\ f_{4}(x,y) &= (bx+l,by+l)
\end{aligned}
\]
with $l=(1-b)$. The attractor is the filled unit square $A=\blacksquare
=[0,1]^{2}$ represented by the black tile in the upper left corner of both
images. The mask is
\[
\begin{aligned}
M_{1}&=f_{1}(\blacksquare), \\
M_{k+1}&=f_{k+1}(\blacksquare)\backslash\cup
_{i=1}^{k}M_{i}\text{ }\quad (k=1,2,3). \end{aligned},
\]
which is referred to as the \textit{tops mask}. In both cases the tiling is
generated by the string $\theta=\overline{1}$.

\begin{figure}[ptbh]
\centering
\fbox{\includegraphics[
trim=0.000000in 0.000000in -0.003555in 0.000000in,
natheight=7.111400in,
natwidth=7.111400in,
height=2.0349in,
width=2.0357in
]{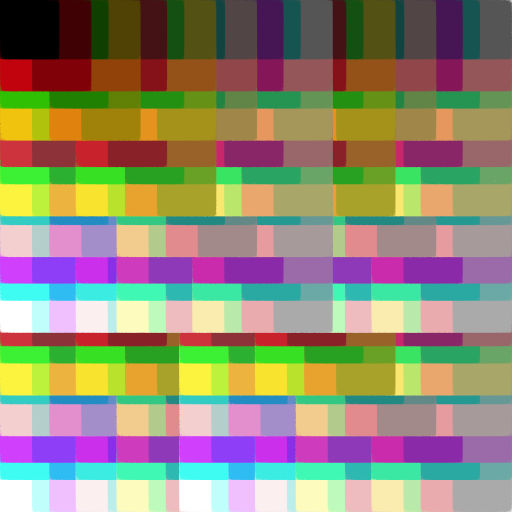}} \caption{Part of a two-dimensional masked tiling. The
top row shows the same one-dimensional masked tiling illustrated in Figure
\ref{tilingstrip002}.}%
\label{tilingsquare002}%
\end{figure}

\begin{figure}[ptb]
\centering
\fbox{\includegraphics[
natheight=14.221800in,
natwidth=14.221800in,
height=2.0182in,
width=2.0182in
]{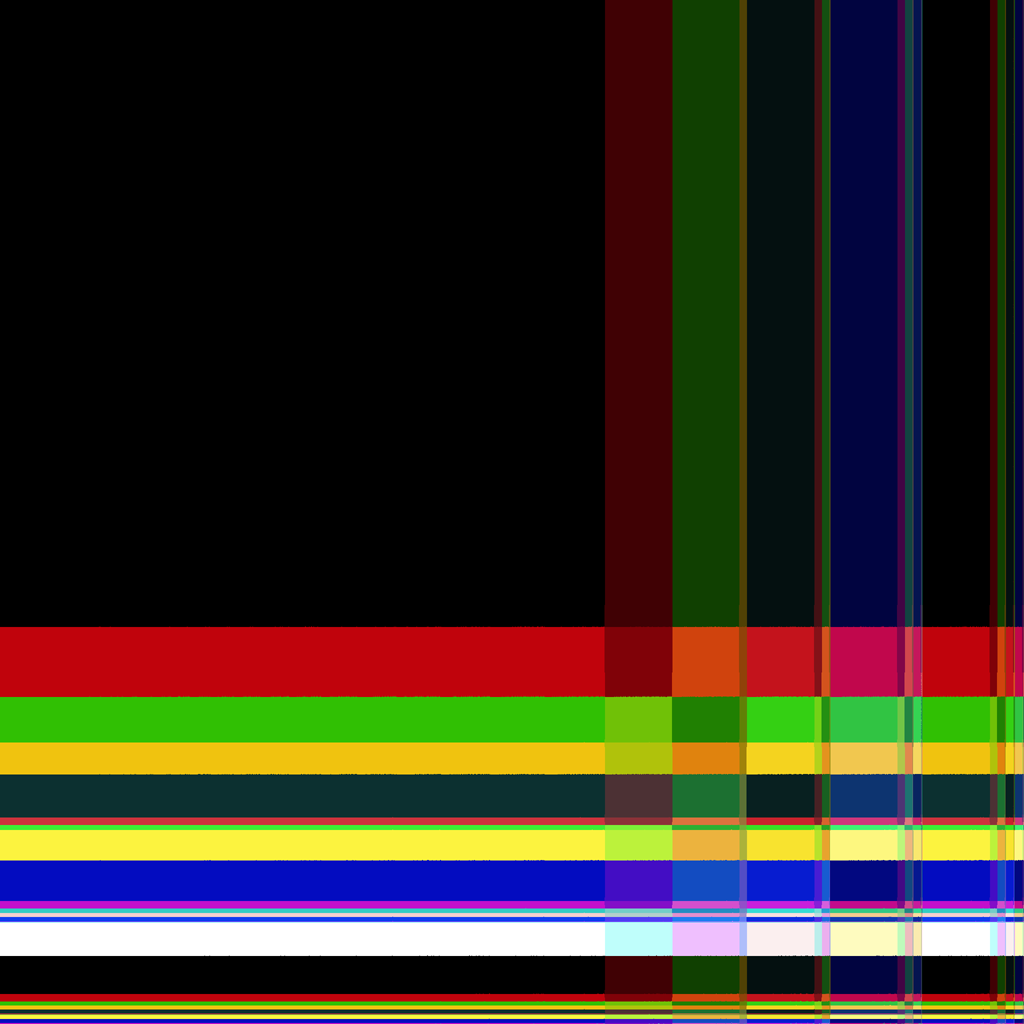}} \caption{A masked tiling of a quadrant of the Euclidean
plane, similar to Figure \ref{tilingsquare002}, but with scaling factor
$b=0.9$.}%
\label{sqtilingpt9pt}%
\end{figure}
\end{example}

\section{Tilings from a Graph IFS}

\label{sec:GIFS}

The construction of tilings from an IFS can be generalized to the construction
of tilings from a graph IFS. There is considerable current interest in graph
IFS and tilings corresponding to Rauzy fractals \cite{jolivet, rao, rauzy}.
Hausdorff dimension of attractors associated with graph IFSs has been
considered in \cite{M}. Graph IFSs arise in connection with substitution
tilings and number theory; see for example \cite{akiyama, berthe}.

Let ${\mathbb{H}}^{M}$ denote the $N$-fold cartesian product of $M$ copies of
${\mathbb{H}}(\mathbb{X })$. A \textit{graph iterated function system} (GIFS)
is a directed graph $G$, possibly with loops and multiple edges in which the
vertices of $G$ are labeled by $\{ 1,2, \dots, M\}$ and each directed edge $e$
is labeled with a continuous function $f_{e}: \mathbb{X }\rightarrow
\mathbb{X}$. It is also assumed that $G$ is \textit{strongly connected}, i.e.,
that there is a directed path from any vertex to any other. Let $E_{ij}$
denote the set of edges from vertex $i$ to vertex $j$. Define the function
\[
F \; : \;{\mathbb{H}}^{M} \rightarrow{\mathbb{H}}^{M}%
\]
as follows. If $\mathbf{X} = (X_{1},X_{2}, \dots, X_{M}) \in{\mathbb{H}}^{M}$,
then
\[
F(\mathbf{X}) = (F_{1}(\mathbf{X}), F_{2}(\mathbf{X}), \dots, F_{M}%
(\mathbf{X})),
\]
where
\[
F_{i}(\mathbf{X}) = \bigcup_{j=1}^{M} \;\bigcup_{e\in E_{ij}} f_{e}(X_{j})
\]
for $i=1,2,\dots M$. It can be shown that, if each $f_{e}$ is a contraction,
then $F$ is a contraction on ${\mathbb{H}} ^{M}$, and consequently has a
unique fixed point or attractor $\mathbf{A} = (A_{1}, A_{2}, \dots, A_{M})$.
The concecpts of non-overlapping and invertible are defined exactly as for an
IFS. In fact, an ordinary IFS is the special case of a graph IFS where $G$ has
exactly one vertex and all the edges are loops.

Assume that the graph IFS is a non-overlapping and invertible with attractor
$\mathbf{A} = (A_{1}, A_{2}, \dots, A_{M})$. Let $G^{\prime}$ denote the graph
obtained from $G$ by reversing the directions on all of the edges. For any
directed (infinite) path $\theta=e_{1}e_{2}\cdots$ in $G^{\prime}$, a tiling
is constructed as follows. First extend previous notation so that $\theta
_{k}=e_{k},\;\theta|k=e_{1}e_{2}\cdots e_{k}$ and $f_{\theta|k}=f_{e_{1}}\circ
f_{e_{2}}\circ\cdots\circ f_{e_{k}},\; (f^{-1})_{\theta|k} :=f_{e_{1}}%
^{-1}\circ f_{e_{2}}^{-1}\circ\cdots\circ f_{e_{k}}^{-1}$. Given any directed
path $\omega$ of length $k$ in $G$ that starts at the vertex at which
$\theta|k$ terminates, let
\[
\begin{aligned} t_{\theta,\omega} &= ((f^{-1})_{\theta|k}\circ f_{\omega})(A_j), \\
T_{\theta,k} &= \{ t_{\theta,\omega} \, : \, \omega \in W_{k} \}, \end{aligned}
\]
where $j$ is the terminal vertex of the path $\omega$, and $W_{k}$ is the set
of directed paths of length $k$ in $G$ that start at the vertex at which
$\theta|k$ terminates. Since, for any $\omega\in W_{k}$, we have
\[
(f^{-1})_{\theta|k}\circ f_{\omega}=(f^{-1})_{\theta|k}\circ(f_{\theta_{k+1}%
})^{-1}\circ f_{\theta_{k+1}}\circ f_{\omega}=(f^{-1})_{\theta|k+1}\circ
f_{\theta_{k+1}\omega},
\]
the inclusion
\[
T_{\theta,k}\subset T_{\theta,k+1}%
\]
holds for all $k$. Therefore
\[
T_{\theta} :=\bigcup_{k=1}^{\infty}T_{\theta,k}
\]
is a tiling.

\begin{figure}[tbh]
 \includegraphics[width=5in, keepaspectratio]{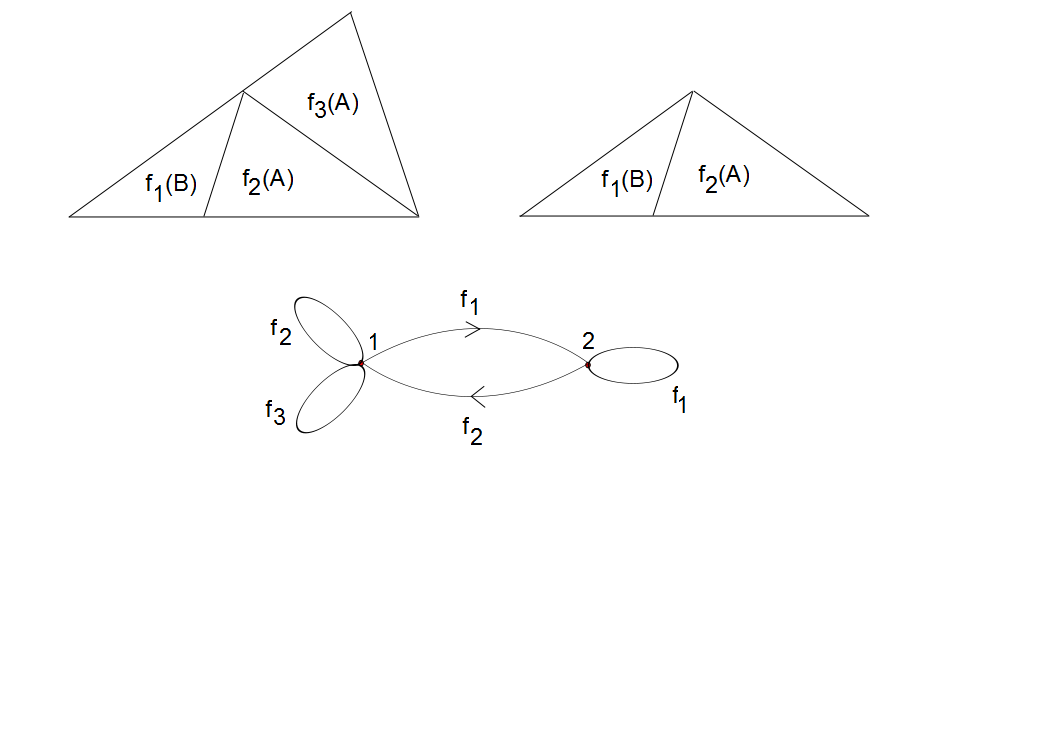}
\vskip -3.5cm\caption{Graph IFS for the Penrose tiles.}
\label{Penrose}
\end{figure}

\begin{example}
(Penrose tilings of ${\mathbb{R}}^{2}$) In this example the graph $G$ is given
in Figure~\ref{Penrose} where, using the complex representation of
${\mathbb{R}}^{2}$, the functions are
\[
f_{1}(z)=\left(  \frac{z}{\tau}+1\right)  \,\omega_{1},\qquad f_{2}%
(z)=\frac{-z}{\tau}+\tau^{2},\qquad f_{3}(z)=\frac{z}{\tau}\,\omega_{3}%
+\tau^{2},
\]
where $\tau=(1+\sqrt{5})/2$ is the golden ratio and $\omega_{k}=\cos
(k\pi/5)+i\,\sin(k\pi/5),\,0\leq k\leq9$, are the tenth roots of unity. The
acute isoceles triangle $A$ and obtuse isoceles triangle $B$ in the figure
have long and short sides in the ratio $\tau:1$, and the angles are
$\pi/5,2\pi/5,2\pi/5$ and $\pi/5,\pi/5,3\pi/5$, respectively. The attractor of
the graph IFS is the pair $(A,B)$:
\[
\begin{aligned} A &= f_1(B)\cup f_2(A) \cup f_3(A), \\
B &= f_1(B) \cup f_2(A). \end{aligned}
\]
Clearly, there are periodic tilings of the plane using copies of these tiles.
However, if $\theta$ is a directed path in $G$ and $T_{\theta}$ tiles
${\mathbb{R}}^{2}$, then this is a non-periodic tiling. Although a Penrose
tiling is usually given in terms of kites and darts or thin and fat rhombs,
these are equivalent to tiling by the acute and obtuse triangles described in
this example.
\end{example}

\begin{example}
(Another GIFS tiling) \label{ex:GIFS} In Figure~\ref{fig:GIFS} the graph has
two vertices coresponding to the two components of the attractor, the
isosceles right triangle $T$ and square $S$ shown at the top. The eight edges
of the the graph, corresponding to the eight functions are shown graphically
by their images, the eight colored triangles and rectangles shown at the top.
Four of the functions map the triangle $T$ onto the four smaller triangles,
and the other four functions map the square $S$ to the two smaller squares and
two rectangles. One of the infinitely many possible tilings that can be
constructed, using the method described of this section, is shown in the
bottom panel.

\begin{figure}[ptb]
\centering
 \hskip 15mm \includegraphics[width=2.5cm, keepaspectratio]{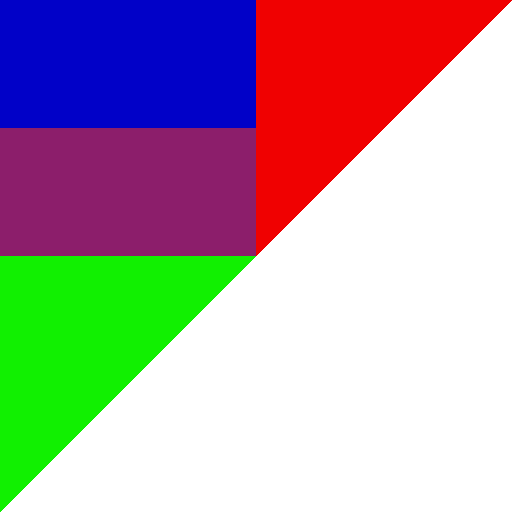}  \hskip 10mm
\includegraphics[width=2.5cm, keepaspectratio] {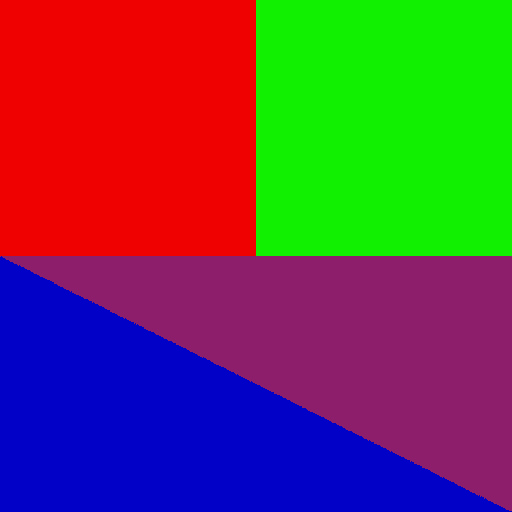}
\newline\vskip 5mm
\fbox{\includegraphics[width=7cm, keepaspectratio] {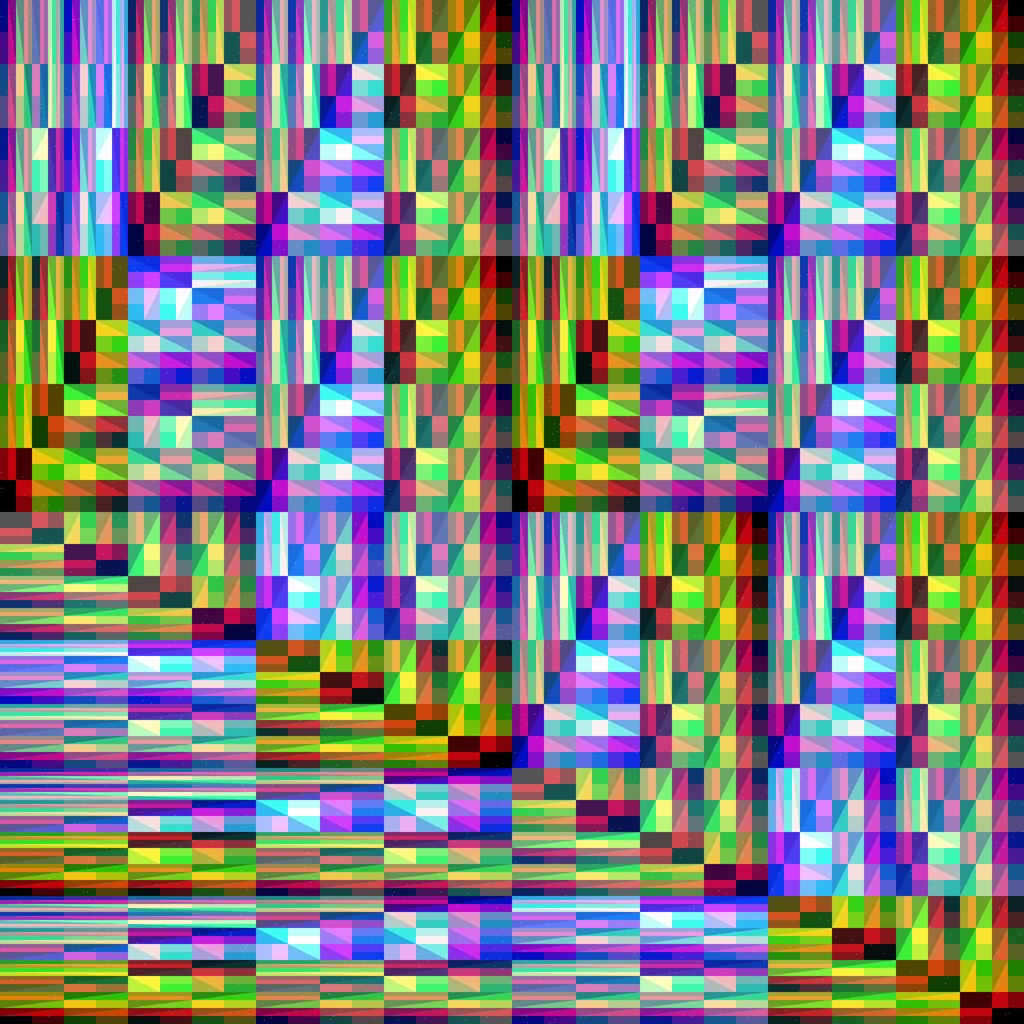}}
\caption{Another GIFS tiling; see Example~\ref{ex:GIFS}.}%
\label{fig:GIFS}%
\end{figure}
\end{example}

\section{Fractal Transformations from Tilings}

\label{sec:FT}

The goal of this section is, given a fractal transformation, to extend its
domain from the attractor of the IFS to the basin of the IFS, for example in
the contractive IFS case, from the attractor to all of ${\mathbb{X}}$.

\subsection{Fractal Transformation}

In Section~\ref{sec:IFS}, each point of a point-fibred attractor an IFS was
assigned a set of addresses.  To choose a particular address for each point of
the attractor, the following notion is introduced.

\begin{definition}
\label{def:section} Let $A$ be a point-fibred attractor of an invertible IFS
$F$. A \textbf{section} of the coordinate map $\pi\, : \, [N]^{\infty}
\rightarrow A$ is a map $\tau\,:\,A\rightarrow[N]^{\infty}$ such that
$\pi\circ\tau$ is the identity. For $x\in A$, the word $\tau(x)$ is referred
to as the \textbf{address} of $x$ with respect to the section $\tau$.
\end{definition}

If $A$ is a point-firbred attractor of $F$, then the following diagram
commutes for all $n\in\lbrack N]$:
\[%
\begin{array}
[c]{ccc}%
[N]^{\infty} & \overset{S_{n}}{\rightarrow} & [N]^{\infty}\\
\pi\downarrow\text{\ \ \ \ } &  & \text{ \ \ \ }\downarrow\pi\\
A & \underset{f_{n}}{\rightarrow} & A
\end{array}
\]
where the inverse shift map $S_{n}\,:\, [N]^{\infty} \rightarrow[N]^{\infty}$
is defined by $S_{n}(\omega)=n\omega$ for $n\in\lbrack N]$.

The notion of fractal transformation was introduced in \cite{tops}; see also
\cite{Ba,BV}. A fractal transformation is a mapping from the attractor of one
IFS to the attractor of another IFS of the following type.

\begin{definition}
Given two IFSs $F$ and $G$ with an equal number of functions, with
point-fibred attractors $A_{F}$ and $A_{G}$, with coordinate maps $\pi_{F}$
and $\pi_{G}$ and with shift invariant sections $\tau_{F}$ and $\tau_{G}$,
each of the maps $\pi_{F} \circ\tau_{G} \, : A_{G} \rightarrow A_{F}$ and
$\pi_{G} \circ\tau_{F} \, : A_{F} \rightarrow A_{G}$ is called a
\textbf{fractal transformation}. If a fractal transformation $h$ is a
homeomorphism, then $h$ is called a \textbf{fractal homeomorphism}.
\end{definition}

A homeomorphism $h \,: A_{F} \rightarrow A_{G}$ is a fractal homeomorphism
with respect to shift invariant sections $\tau_{F}$ and $\tau_{G}$ if and only
if $h$ satisfies the commuting diagram
\begin{equation}
\label{homeo}%
\begin{array}
[c]{ccc}%
A_{F} & \underset{h}{\rightarrow} & A_{G}\\
\text{$\tau_{F}$} \searrow &  & \swarrow\text{$\tau_{G}$}\\
& [N]^{\infty} &
\end{array}
\end{equation}
i.e., the function $h$ takes each point $x\in A_{F}$ with address $\omega=
\tau_{F}(x)$ to the point $y \in A_{G}$ with the same address $\omega=\tau
_{G}(y) $. See \cite{BV}. \medskip

\subsection{Extending the Domain of a Fractal Transformation}

To extend the domain of a fractal transformation, we introduce a ``decimal"
notation $\theta\bullet\omega$, where $\theta$ is a finite word and $\omega$
is an infinite word in the alphabet $[N]$. Let%

\[
\Omega:= \{\theta_{1} \theta_{2} \cdots\theta_{k} \bullet\omega_{1} \omega_{2}
\cdots\, | \, k\in\{0\}\cup\mathbb{N}, \, \theta_{i}, \omega_{i} \in[N] \,
\text{for all} \, i, \, \theta_{k} \neq\omega_{1}\}.
\]
For an IFS $F$ the coordinate map $\pi\, : \, [N]^{\infty} \rightarrow A$ can
be extended as follows.

\begin{definition}
\label{def:excode}  For an invertible IFS on a complete metric space
${\mathbb{X}}$ with point-fibred attractor $A$ and $\theta\bullet\omega
\in\Omega$, define the \textbf{extended coordinate map} $\widehat{\pi} \, : \,
\Omega\rightarrow{\mathbb{X}}$ of $\pi\, : \, [N]^{\infty} \rightarrow A$ by
\[
\widehat{\pi} (\theta\bullet\omega) := (f^{-1})_{\theta}(\pi(\omega)).
\] \vskip 2mm
\noindent Recall the notation $F^{*}:=\left(  \mathbb{X};f_{1}^{-1},f_{2}^{-1},...,f_{N}^{-1}\right)$,
so that  $F^*(X) = \bigcup_{f\in F} f^{-1}(X)$.
The range of $\widehat{\pi}$, called the \textbf{fast basin} of the attractor
$A$ and denoted ${\widehat{B}} = {\widehat{B}}(A)$, is equal to
\[
{\widehat{B}}(A) = \bigcup_{k=0}^{\infty} (F^*)^{k} (A) =\left\{  x
\in{\mathbb{X}} \, : \, f_{\theta}(x) \in A \text{ \; for some\;} \theta
\in\bigcup_{k=1}^{\infty} [N]^{k}\right\} .
\]

\end{definition}

\begin{prop}
\label{prop:fast} The fast basin $\widehat{B}$ of a point-fibred attractor $A$
of an invertible IFS $F$ on ${\mathbb{X}}$ is the smallest subset of
${\mathbb{X}}$ invariant under $F^*$ and containing $A$, i.e.,
$F^*({\widehat{B}}) = {\widehat{B}}$ and
\[
{\widehat{B}} = \bigcap_{F^*(D) = D} D.
\]
Moreover, if $B$ is the basin and $\widehat{B}$ the fast basin of $A$, then

\begin{enumerate}
\item $B \subseteq{\widehat{B}}$ if $A^{o}\neq\emptyset$,

\item ${\widehat{B}}^{o} = \emptyset$ if $A^{o} =\emptyset$.
\end{enumerate}
\end{prop}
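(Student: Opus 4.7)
The plan is to establish the four assertions in order. For the invariance $F^*(\widehat B)=\widehat B$, I would use that $F^*$ commutes with arbitrary unions (being a union of preimage operators), so
\[
F^*(\widehat B)=\bigcup_{k=0}^\infty F^*\bigl((F^*)^k(A)\bigr)=\bigcup_{k=1}^\infty (F^*)^k(A)\subseteq \widehat B.
\]
The only piece of $\widehat B$ possibly missing on the right is the $k=0$ term $A$, but the attractor identity $F(A)=A$ forces $f_i(A)\subseteq A$ for every $i$, which is equivalent to $A\subseteq f_i^{-1}(A)\subseteq F^*(A)$, so the reverse inclusion $\widehat B\subseteq F^*(\widehat B)$ follows.

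For the minimality characterization I would read the intersection $\bigcap_{F^*(D)=D} D$ as ranging over $F^*$-invariant $D$ containing $A$ (otherwise $D=\emptyset$ trivializes the formula). Any such $D$ satisfies $D=(F^*)^k(D)\supseteq (F^*)^k(A)$ for every $k\geq 0$ by monotonicity and induction, hence $D\supseteq \bigcup_k (F^*)^k(A)=\widehat B$. Since $\widehat B$ itself lies in this family by the previous paragraph, the intersection equals $\widehat B$.

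For statement (1), fix a point $a\in A^o$ and an open ball around $a$ of radius $\epsilon_0$ contained in $A$. For $x\in B$, the definition of basin gives Hausdorff convergence $F^k(\{x\})\to A$, so for $k$ sufficiently large every point of $A$ lies within $\epsilon_0$ of some element of $F^k(\{x\})$. In particular $a$ is approximated by some $f_\omega(x)$ with $\omega\in[N]^k$, forcing $f_\omega(x)\in A$, hence $x\in f_\omega^{-1}(A)\subseteq (F^*)^k(A)\subseteq \widehat B$. For statement (2), if $A^o=\emptyset$ then the compact set $A$ is closed and nowhere dense. Since each $f_\omega$ is a homeomorphism, $f_\omega^{-1}(A)$ is again closed and has empty interior (homeomorphisms preserve interiors), so it is nowhere dense. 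Consequently $\widehat B=\bigcup_{k\geq 0}\bigcup_{\omega\in[N]^k} f_\omega^{-1}(A)$ is a countable union of nowhere dense closed sets, and the Baire category theorem in the complete metric space $\mathbb X$ yields $\widehat B^o=\emptyset$.

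The step I expect to be the main obstacle is (1), where one must convert the abstract Hausdorff limit $F^k(\{x\})\to A$ into the existence of a concrete branch $\omega$ with $f_\omega(x)\in A$. The hypothesis $A^o\neq\emptyset$ is exactly what bridges this gap, turning metric proximity into set membership; without it the convergence tells us only that iterates cluster near $A$, not that some iterate actually lands inside $A$. The remaining parts are essentially formal manipulations of the definition of $\widehat B$ together with a single invocation of Baire category.
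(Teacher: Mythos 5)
Your proposal is correct, and parts of it are actually more careful than the paper's own proof (the paper dismisses the invariance $F^*(\widehat{B})=\widehat{B}$ with ``follows from the definition,'' whereas you supply the needed observation that $F(A)=A$ forces $A\subseteq f_i^{-1}(A)$, which is exactly the point that makes the reverse inclusion work; your reading of the intersection as ranging over invariant sets containing $A$ also matches the paper's prose). The minimality argument and the Baire category argument for statement (2) coincide with the paper's. The one place where you take a genuinely different route is statement (1): the paper fixes an infinite word $\omega$ with $\pi(\omega)\in A^{o}$ and uses the point-fibred property to get $f_{\omega|k}(x)\rightarrow\pi(\omega)$, so that a single branch of the orbit eventually enters $A^{o}$; you instead use the Hausdorff convergence $F^{k}(\{x\})\rightarrow A$ from the definition of the basin, so that \emph{some} branch $f_{\omega}(x)$, $\omega\in[N]^{k}$, lands in the ball $B(a,\epsilon_{0})\subseteq A$. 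Both arguments use $A^{o}\neq\emptyset$ in the same essential way (to convert proximity into membership), but yours has the mild advantage of not invoking the point-fibred hypothesis for this part, at the cost of extracting the branch from the Hausdorff limit rather than following a prescribed address. One cosmetic remark: when you write $x\in f_{\omega}^{-1}(A)\subseteq(F^{*})^{k}(A)$, note that $f_{\omega}^{-1}=(f^{-1})_{\omega'}$ for the \emph{reversed} word $\omega'$, so the containment holds but via the reversed index; alternatively you can appeal directly to the paper's second description of $\widehat{B}$ as $\{x: f_{\theta}(x)\in A \text{ for some finite word } \theta\}$.
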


\begin{proof}
Concerning the first statement, that $\widehat{B}$ is invariant follows from
the definition. Concerning the minimality statement, if $C$ is invariant under
$F^*$ and $A \subseteq C$, then
\[
{\widehat{B}} = \bigcup_{k=0}^{\infty} (F^*)^{k} (A) \subseteq\bigcup
_{k=0}^{\infty} C = C.
\]
Concerning the second statement, in the case that $A^{o} \neq\emptyset$, there
is a $\omega\in[N]^{\infty}$ such that $\pi(\omega) \in A^{o}$. If $x \in B$,
then $\lim_{k\rightarrow\infty} f_{\omega|k}(x) = \pi(\omega) \in A^{o}$.
Therefore $f_{\omega|K} \in A^{o}$ for some $K$. In the case that $A^{o}
=\emptyset$, since ${\widehat{B}} =\bigcup_{k=0}^{\infty} (F^*)^{k} (A)$ is
a countable union of nowhere dense sets, then so is $\widehat{B}$ by the Baire
catgegory theorem.
\end{proof}

\begin{definition}
Let $F$ be an invertible IFS with point-fibred attractor $A$. Let $\theta
\in[N]^{\infty}$, and recall the notation $B(\theta) :=\bigcup_{k=1}^{\infty
}(f^{-1})_{\theta|{k}}(A) $. For a section $\tau\, : \, A \rightarrow
[N]^{\infty}$ define the \textbf{extended section}
\[
{\widehat{\tau} _{\theta}}\, : B(\theta) \rightarrow\Omega
\]
as follows. For $x \in B(\theta)$, let $k$ be the least integer such that $x
\in(f^{-1})_{\theta|k}(A)$. Therefore there is a $y\in A$ such that $x =
(f^{-1})_{\theta|k}(y)$. Define
\[
{\widehat{\tau}_{\theta}}(x) := \theta|k\bullet\tau(y).
\]
Then clearly $\widehat{\pi}\circ\widehat{\tau}_{\theta}$ is the identity.
\end{definition}

\medskip

Now consider two IFSs $F$ and $G$ on the same complete metric space, with an
equal number of functions, with point-fibred attractors $A_{F}$ and $A_{G}$,
with coordinate maps $\pi_{F}$ and $\pi_{G}$, and with sections $\tau_{F}$ and
$\tau_{G}$. Each word $\theta\in[N]^{\infty}$ induces two extended sections
$\widehat{\tau_{F}} := \widehat{\tau_{F,\theta}}$ and $\widehat{\tau_{G}}
:=\widehat{\tau_{G,\theta}}$. The maps
\[
\widehat{\pi}_{F} \circ\widehat{\tau_{G}} \, : B_{G}(\theta) \rightarrow
B_{F}(\theta) \qquad\text{and} \qquad\widehat{\pi}_{G} \circ\widehat{\tau_{F}}
\, : B_{F}(\theta) \rightarrow B_{G}(\theta)
\]
are called \textbf{extended fractal transformations} of the fractal
transformations
\[
\pi_{F} \circ\tau_{G} \, : \, A_{G} \rightarrow A_{F} \qquad\text{and}
\qquad\pi_{G} \circ\tau_{F} \, : \, A_{F} \rightarrow A_{G},
\]
respectively. In particular, if $\theta$ is full for both $F$ and $G$, then an
extended fractal transformation is defined on the basin of an attractor. If,
in addition, $F$ and $G$ are contractive, then the extended fractal
transformations take the whole space ${\mathbb{X}}$ to itself.

\begin{example}
\label{ex:FO2} This is a continuation of Example~\ref{ex:FO1} in
Section~\ref{sec:non-overlapping}. Consider two points $E_{1}, E_{2}$ in the
unit square $\blacksquare$ and the corresponding IFSs $F_{E_{1}}$ and
$F_{E_{2}}$. As described above, each word $\theta\in\{1,2,3,4\}^{\infty}$
generates a fractal transformation, often of $\mathbb{R}^{2}$ to itself. The
fractal transformation, in this case, is a fractal homeomophism \cite{Ba}. The
tiling in Example~\ref{ex:tri} can, in a similar fashion, generate an infinite
family of fractal homeomorphisms of the plane.
\end{example}

 A transformation that takes, for example, the unit square $\blacksquare$ to itself, can be visualized by its action on an image.   Define an {\it image} as a function $c \, : \, \blacksquare \rightarrow {\mathcal C}$, where $\mathcal C$ denotes the color palate, for example  ${\mathcal C} = \{ 0,1,2,\dots, 255\}^3$.  If $h$ is any transformation from $\blacksquare$ onto $\blacksquare$, define the {\it transformed image} $h(c) \, : \, \square \rightarrow {\mathcal C}$ by $h(c) := c \circ h.$   

\begin{example}  Consider two "fold-out" affine IFSs, $F$ and $G$, which, up to conjugation by an affine transformation, are of the form in Example~\ref{ex:FO1}  for two different values of the parameter $E$. The IFS $F$ corresponds to $E=(0.5,0.5)$ and the IFS $G$ corresponds to $E=(0.4,0.45)$. The attractor of $F$ is a small rectangle $L$ located in the bottom left corner of the left hand image. The attractor of $G$ is a small rectangle R situated in the bottom left corner of the right hand image. The fractal homeomorphism generated by $F$ and $G$ maps the portion of the left hand image lying over $L$ to the portion of the right hand image that lies over $R$. The word $\theta=\overline{1}$ extends the fractal homeomorphism to the upper left quadrant of the plane; the right hand image illustrates the result of applying this extended homeomorphism to the left hand image. 
\end{example}

\begin{figure}[tbh]
\centering
\fbox{\includegraphics[width=5in, keepaspectratio]{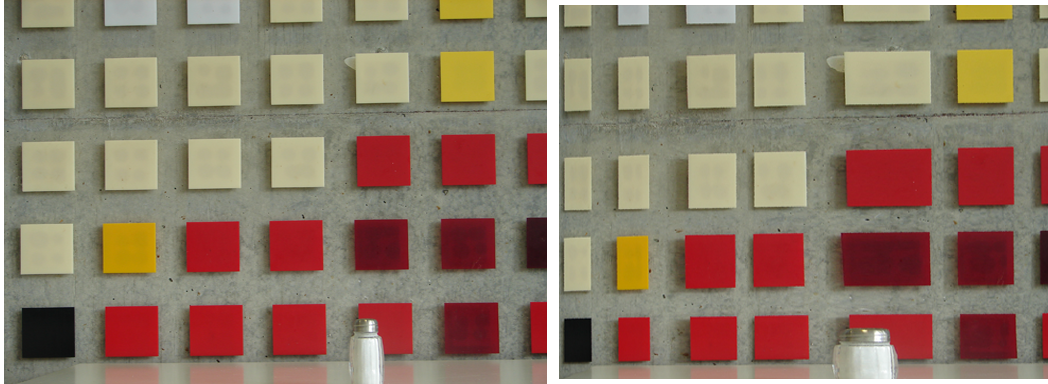}}
\vskip 3mm\caption{{}Example of an extended fractal homeomorphism, generated
by a pair of affine IFSs each with a rectangular just-touching attractor.}%
\label{fig:FT}%
\end{figure}

\end{document}